\begin{document}

% \mmbox enables macros to survive outside of $ ... $
\newcommand{\mmbox}[1]{\mbox{${#1}$}}
\newcommand{\proj}[1]{\mmbox{{\mathbb P}^{#1}}}
\newcommand{\affine}[1]{\mmbox{{\mathbb A}^{#1}}}
\newcommand{\Ann}[1]{\mmbox{{\rm Ann}({#1})}}
\newcommand{\caps}[3]{\mmbox{{#1}_{#2} \cap \ldots \cap {#1}_{#3}}}
\newcommand{\N}{{\mathbb N}}
\newcommand{\Z}{{\mathbb Z}}
\newcommand{\R}{{\mathbb R}}
\newcommand{\kk}{{\mathbb R}}
\newcommand{\p}{{\mathbb P}}
\newcommand{\A}{{\mathcal A}}
\newcommand{\RJ}{{\mathcal R}/{\mathcal J}}
\newcommand{\C}{{\mathbb C}}
\newcommand{\CR}{C^r(\hat P)}
\newcommand{\Der}{\mathop{\rm Der}\nolimits}
\newcommand{\Tor}{\mathop{\rm Tor}\nolimits}
\newcommand{\ann}{\mathop{\rm ann}\nolimits}
\newcommand{\Ext}{\mathop{\rm Ext}\nolimits}
\newcommand{\codim}{\mathop{\rm codim}\nolimits}
\newcommand{\pdim}{\mathop{\rm pdim}\nolimits}
\newcommand{\depth}{\mathop{\rm depth}\nolimits}
\newcommand{\im}{\mathop{\rm Im}\nolimits}
\newcommand{\rank}{\mathop{\rm rank}\nolimits}
\newcommand{\supp}{\mathop{\rm supp}\nolimits}
\newcommand{\arrow}[1]{\stackrel{#1}{\longrightarrow}}
\newcommand{\coker}{\mathop{\rm  }\nolimits}
\sloppy
\newtheorem{defn0}{Definition}[section]
\newtheorem{prop0}[defn0]{Proposition}
\newtheorem{conj0}[defn0]{Conjecture}
\newtheorem{thm0}[defn0]{Theorem}
\newtheorem{lem0}[defn0]{Lemma}
\newtheorem{rmk0}[defn0]{Remark}
\newtheorem{corollary0}[defn0]{Corollary}
\newtheorem{example0}[defn0]{Example}
\newcommand{\An}{\mathop{\rm A_n}\nolimits}
\newcommand{\conv}{\mathop{conv}\nolimits}
\newcommand{\Aroot}{\mathop{\rm A}\nolimits}
\newenvironment{defn}{\begin{defn0}}{\end{defn0}}
\newenvironment{prop}{\begin{prop0}}{\end{prop0}}
\newenvironment{conj}{\begin{conj0}}{\end{conj0}}
\newenvironment{thm}{\begin{thm0}}{\end{thm0}}
\newenvironment{lem}{\begin{lem0}}{\end{lem0}}
\newenvironment{cor}{\begin{corollary0}}{\end{corollary0}}
\newenvironment{exm}{\begin{example0}\rm}{\end{example0}}
\newenvironment{rmk}{\begin{rmk0}\rm}{\end{rmk0}}

\newcommand{\defref}[1]{Definition~\ref{#1}}
\newcommand{\propref}[1]{Proposition~\ref{#1}}
\newcommand{\thmref}[1]{Theorem~\ref{#1}}
\newcommand{\lemref}[1]{Lemma~\ref{#1}}
\newcommand{\corref}[1]{Corollary~\ref{#1}}
\newcommand{\exref}[1]{Example~\ref{#1}}
\newcommand{\secref}[1]{Section~\ref{#1}}
\newcommand{\poina}{\pi({\mathcal A}, t)}
\newcommand{\poinc}{\pi({\mathcal C}, t)}
\newcommand{\std}{Gr\"{o}bner}
\newcommand{\jq}{J_{Q}}

\title[Polynomial Interpolation in higher dimension] {Polynomial
  Interpolation in higher dimension:\\ from simplicial complexes to GC
  sets}

\author{Nathan Fieldsteel and Hal Schenck}
\thanks{Schenck supported by  NSF 1312071}
\address{Fieldsteel: Mathematics Department \\ University of Illinois Urbana-Champaign\\
  Urbana \\ IL 61801\\ USA}
\address{Schenck: Mathematics Department \\ University of Illinois Urbana-Champaign\\
  Urbana \\ IL 61801\\ USA}
\email{fieldst2@math.uiuc.edu}
\email{schenck@math.uiuc.edu}

\subjclass[2000]{Primary 41A05, Secondary 41A10, 41A65}
\keywords{Polynomial interpolation, Simplicial complex, Bi-Cohen Macaulay}

\begin{abstract}
\noindent Geometrically characterized (GC) sets were introduced by Chung-Yao in their work on polynomial interpolation in $\R^d$. Conjectures on the structure of GC sets have been proposed by Gasca-Maeztu for the planar case, and in 
higher dimension by de Boor and Apozyan-Hakopian. We investigate GC sets in dimension three or more, and show that one way to obtain such sets is from the combinatorics of simplicial complexes.
\end{abstract}
\maketitle
\vskip -.1in
%%%%%%%%%%%%%%%%%%%%%%%%%%%%%%%%%%%%%%%%%%%%%%%%%%%%%%%%
% Leave room to correct
%\renewcommand{\baselinestretch}{1.5}
%\small\normalsize % to get previous line to take
%%%%%%%%%%%%%%%%%%%%%%%%%%%%%%%%%%%%%%%%%%%%%%%%%%%%%%%%
\vskip -.1in
\section{Introduction}\label{sec:intro}
Given a set of points $X \subseteq \R^d$, one goal of interpolation is 
to find a set of functions which separate the points; that is, so that
for each point $p \in X$, there is a unique function which vanishes on
$X \setminus p$ but not at $p$. Perhaps the most
studied case occurs when the functions are polynomials.  
\begin{defn}\cite{dB}
A set of points $X \subseteq \R^d$ is {\em n-correct} if the evaluation map
on the set of polynomials of degree at most $n$ is an isomorphism 
onto $\R^{|X|}$; note that to be $n$-correct $X$ must consist of 
${d+n \choose n}$ points. A $k$-dimensional affine subspace of $\R^d$ is 
{\em maximal} if 
it contains a subset of $X$ of cardinality  ${k+n \choose n}$. 
\end{defn}
For example, a hyperplane is maximal if it contains a subset of $X$ of 
cardinality  ${d-1+n \choose n}$. Chung-Yao introduced the
geometrically characterized property:
\begin{defn}\cite{cy},\label{gcn}
A set $X$ of ${n+d \choose d}$ points in $\R^d$ is called a $GC_{d,n}$ set if
for each point $p \in X$, there exists a product 
\[
Q_p=\prod_{k=1}^n l_k
\]
of linear forms $l_k$ such that $Q_p(q) = \delta_{pq}$ for all $p,
q \in X$. 
\end{defn}
Clearly a $GC_{d,n}$ set is n-correct. 
In \cite{gm}, Gasca-Maeztu conjectured that in $\R^2$, 
every $GC_{2,n}$ set contains a line with $n+1$ points of $X$,
which is a maximal hyperplane. 
In \cite{bu}, Busch shows the conjecture holds for $n \le 4$. 
The last 30 years have seen much additional work on the conjecture; see
\cite{dbr}, \cite{cg}, \cite{cgs}, \cite{cg1}, \cite{gs}, 
\cite{gs1},  \cite{hjz1}, \cite{hr}. 
In \cite{cg} Carnicer-Gasca showed that the Gasca-Maeztu 
conjecture implies that a $GC_{2,n}$ set in $\R^2$ contains 3 maximal lines. 
Building on this, in \cite{dB}, de Boor proposed two generalizations of the Gasca-Maeztu conjecture:
\begin{conj}\label{gmconj}
A $GC_{d,n}$ set contains a maximal hyperplane.
\end{conj}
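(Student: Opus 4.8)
The plan is to prove Conjecture~\ref{gmconj} by induction on the pair $(d,n)$, using a decomposition principle that turns a maximal hyperplane into a recursive structure, and reducing the whole statement to a single existence claim about hyperplane--point incidences. Throughout, fix a $GC_{d,n}$ set $X$ and, for each $p\in X$, write $Q_p=\prod_{k=1}^n l_k^p$ as in \defref{gcn}; call a hyperplane \emph{used} if it is the zero locus of some factor $l_k^p$. Since $Q_p$ vanishes on $X\setminus p$ but not at $p$, the $n$ used hyperplanes coming from $p$ cover $X\setminus p$ and avoid $p$.

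First I would establish the decomposition principle: if $H$ is a hyperplane with $|X\cap H|\ge\binom{n+d-1}{d-1}$, then in fact equality holds, $X\cap H$ is a $GC_{d-1,n}$ set inside $H$, and $X\setminus H$ is a $GC_{d,n-1}$ set. The first restriction is immediate: for $p\in X\cap H$, no factor $l_k^p$ can vanish at $p$, so $H$ is not among them and all $n$ factors restrict to genuine linear forms on $H\cong\R^{d-1}$, yielding a separating product there; a cardinality count forces $|X\cap H|=\binom{n+d-1}{d-1}$. The second restriction is the delicate half: one must show that for every $p\notin H$ the hyperplane $H$ itself occurs as a factor of $Q_p$, after which $Q_p$ with its $H$-factor removed is a product of $n-1$ forms separating $p$ within $X\setminus H$; here $|X\setminus H|=\binom{n+d}{d}-\binom{n+d-1}{d-1}=\binom{n+d-1}{d}$ matches the size of a $GC_{d,n-1}$ set exactly. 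Granting this principle, the conjecture reduces to the existence statement: \emph{every $GC_{d,n}$ set possesses some hyperplane meeting $X$ in at least $\binom{n+d-1}{d-1}$ points}, since such a hyperplane is then maximal.

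With the base cases $n=1$ (any $d$ of the points span a maximal hyperplane) and $d=1$ (trivial) in hand, the inductive step is exactly this existence claim, and I would attack it by a global double count of incidences among the used hyperplanes. Fix $p$; the $n$ used hyperplanes through the other points cover $\binom{n+d}{d}-1$ points, so the densest of them carries at least
\[
\frac{1}{n}\left(\binom{n+d}{d}-1\right)
\]
points of $X$. Summing over all $p$ and grouping by the repeated appearances of a fixed used hyperplane $H$ across the various $Q_p$ (a hyperplane containing many points is an efficient cover and should therefore be forced into $Q_p$ for most $p\notin H$) should pin down the incidence vector of the arrangement $\bigcup_p\{l_k^p=0\}$ tightly enough to produce a hyperplane at the maximal threshold.

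The main obstacle is precisely this last upgrade. Writing $\binom{n+d-1}{d-1}=\tfrac{d}{n+d}\binom{n+d}{d}$, the target density is $\tfrac{d}{n+d}$, whereas the pigeonhole bound above only guarantees density about $\tfrac1n$; since $\tfrac{d}{n+d}\ge\tfrac1n$ whenever $d,n\ge2$, naive averaging provably undershoots, and this gap is the entire content of the conjecture --- it is open already for $d=2$, where it is the Gasca--Maeztu conjecture. Closing it cannot be purely combinatorial: I expect it to require a rigidity, or ``no near-maximal hyperplane without a maximal one,'' dichotomy, proved either through an algebraic relation among the Lagrange products $Q_p$ (exploiting that they form a dual basis for the degree-$\le n$ polynomials) or through a matroid/convexity analysis of the hyperplane arrangement that the factorizations cut out. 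Establishing such a rigidity result is the crux and the part I expect to be hardest.
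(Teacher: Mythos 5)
You are attempting to prove a statement that the paper records as \emph{false}. Conjecture~\ref{gmconj} is not a theorem of this paper; it is de Boor's proposed generalization of the Gasca--Maeztu conjecture \cite{dB}, and the introduction explicitly explains its status: de Boor constructed a $GC_{3,2}$ set with fewer than $d+1$ maximal hyperplanes (refuting Conjecture~\ref{gmconj1} without extra hypotheses), and Apozyan \cite{Ap} used that construction to produce a $GC_{6,2}$ set with \emph{no} maximal hyperplane at all, ``so Conjecture~\ref{gmconj} fails as stated.'' Consequently there is no proof in the paper to compare against, and no proof strategy --- yours or any other --- can succeed without adding hypotheses. The paper's actual contribution on this front is constructive and conditional: it builds $GC_{d,n}$ sets by specializing Bi-Cohen--Macaulay Stanley--Reisner ideals, and shows (Theorem~\ref{CYdet}) that maximal monomial hyperplanes and the monomial GC property survive specialization, so that particular classes of GC sets (e.g.\ Chung--Yao configurations) do contain maximal hyperplanes.

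On the merits of your outline: the decomposition principle is sound and essentially classical --- if $H$ meets $X$ in $\binom{n+d-1}{d-1}$ points then $X\cap H$ is $GC_{d-1,n}$ in $H$, and since $X\cap H$ is $n$-correct in $H$, each $Q_p$ with $p\notin H$ restricts to a degree-$n$ polynomial vanishing on an $n$-correct set, hence vanishes identically on $H$, forcing $H\mid Q_p$; this is the higher-dimensional analogue of Carnicer--Gasca \cite{cg}. You are also candid that the real content is the existence claim (upgrading the pigeonhole density $\tfrac1n$ to $\tfrac{d}{n+d}$), and that is exactly where the attempt must die: the ``rigidity'' dichotomy you hope to prove does not exist, because Apozyan's $GC_{6,2}$ set has no hyperplane reaching the threshold $\binom{n+d-1}{d-1}$ whatsoever. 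Your inductive scheme would at best be salvageable where the conjecture is actually known: $d=2$ with $n\le 5$ (Gasca--Maeztu, \cite{hjz1}, \cite{hjz2}), and $GC_{3,2}$ sets \cite{AAK}. If you want a true general statement to aim at, the surviving open problem in this direction is the Apozyan--Hakopian conjecture cited in the paper, that a $GC_{d,n}$ set contains at least $\binom{d+1}{2}$ maximal \emph{lines}, not hyperplanes.
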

\begin{conj}\label{gmconj1}
A $GC_{d,n}$ set contains at least $d+1$ maximal hyperplanes.
\end{conj}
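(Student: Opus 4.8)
The plan is to deduce Conjecture \ref{gmconj1} from Conjecture \ref{gmconj}, following the logic by which Carnicer–Gasca derived the existence of three maximal lines in $\R^2$ from the Gasca–Maeztu conjecture. I would induct on $n+d$. The base cases are immediate: when $n=1$ the set $X$ consists of $d+1$ affinely independent points, i.e. a simplex, and each of the $d+1$ facet spans is a hyperplane carrying $d=\binom{d-1+1}{1}$ points, so there are exactly $d+1$ maximal hyperplanes; when $d=1$, each of the $n+1$ points is itself a maximal hyperplane, and $n+1\ge 2=d+1$. So it remains to run the inductive step for $d\ge 2$, $n\ge 2$.

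The engine of the induction is the recursive behavior of a GC set under a maximal hyperplane $H$, which I would establish first. For $p\in X\cap H$ the form $H$ cannot divide $Q_p$, since that would force $Q_p(p)=0$; hence $Q_p|_H$ is a product of $n$ linear forms separating $p$ within $X\cap H$, and as $|X\cap H|=\binom{d-1+n}{n}$ this exhibits $X\cap H$ as a $GC_{d-1,n}$ set. Conversely, for $p\notin H$ the polynomial $Q_p$ vanishes on the $n$-correct set $X\cap H$, so $Q_p|_H\equiv 0$, giving $H\mid Q_p$; the complementary degree-$(n-1)$ factor separates $p$ within $X\setminus H$, and since $|X\setminus H|=\binom{d+n-1}{d}$ this exhibits $X\setminus H$ as a $GC_{d,n-1}$ set.

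Next I would analyze the other maximal hyperplanes through this decomposition. Any hyperplane $K\ne H$ meets $X$ in $|K\cap X\cap H|+|K\cap(X\setminus H)|$ points, where the first summand is at most $\binom{d+n-2}{d-2}$ (the size of a maximal hyperplane of the $GC_{d-1,n}$ set $X\cap H$) and the second is at most $\binom{d+n-2}{d-1}$ (the size of a maximal hyperplane of the $GC_{d,n-1}$ set $X\setminus H$). By Pascal these maxima sum to exactly $\binom{d+n-1}{d-1}$, the cardinality forced on a maximal hyperplane of $X$. Thus $K$ is maximal for $X$ \emph{if and only if} $K\cap H$ is maximal for $X\cap H$ and $K\cap(X\setminus H)$ is maximal for $X\setminus H$, simultaneously. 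By the inductive hypothesis $X\setminus H$ has at least $d+1$ maximal hyperplanes and $X\cap H$ has at least $d$; together with $H$ itself, these form the pool from which I would try to assemble the required $d+1$ maximal hyperplanes of $X$.

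The hard part, and precisely the reason the statement remains a conjecture, is the \emph{matching step}: one must pair the recursively produced maximal hyperplanes of $X\setminus H$ with those of $X\cap H$ so that each chosen pair is genuinely coplanar in $\R^d$, i.e. so that the given $(d-2)$-flat inside $H$ and the corresponding hyperplane-worth of points of $X\setminus H$ lie on a single common hyperplane $K$. Coplanarity is a real geometric constraint that the GC axioms do not obviously supply, and the count above certifies only the two traces individually, not their alignment. I expect resolving this to require a finer study of which linear forms $l_k$ are shared across the family $\{Q_p\}$, combined with an incidence/Euler-type count bounding the number of maximal hyperplanes below, in analogy with the combinatorial identities Carnicer–Gasca use in the plane; in dimension $\ge 3$ the intersection pattern of the used hyperplanes is far richer than an arrangement of lines in $\R^2$, which is where the difficulty concentrates. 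For the GC sets arising from a simplicial complex, however, the used hyperplanes and their incidences are prescribed by the combinatorics of the complex, and it is exactly this explicit structure that I would exploit to make the matching step effective in the cases treated here.
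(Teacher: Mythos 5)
There is no proof in the paper for you to match: the statement you were given is Conjecture~\ref{gmconj1}, which the paper states (following de~Boor) and explicitly does \emph{not} prove. Worse, the paper records immediately after stating it that the conjecture is false without additional hypotheses: de~Boor constructed a $GC_{3,2}$ set that does not have four maximal hyperplanes, and Apozyan built on this to produce a $GC_{6,2}$ set with \emph{no} maximal hyperplane, which also kills Conjecture~\ref{gmconj} as stated. So no blind proof attempt can succeed, and any argument that appears to close the gap must contain an error.

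Your proposal is mathematically sound in its preparatory steps --- the facts that a maximal hyperplane $H$ induces decompositions $X\cap H$ (a $GC_{d-1,n}$ set) and $X\setminus H$ (a $GC_{d,n-1}$ set), and that $K\neq H$ is maximal for $X$ iff both traces are maximal, are correct and standard (this is the Carnicer--Gasca machinery) --- and you are candid that the ``matching step'' is unresolved. But the situation is worse than ``hard'': the de~Boor $GC_{3,2}$ example shows the matching step is \emph{impossible} in general, since that set satisfies all your recursive hypotheses (the base cases $n=1$, $d=1$ and the $(d,n)=(2,\cdot)$ planar cases hold, so the inductive input is available) yet has at most three maximal hyperplanes. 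Moreover your induction cannot even begin without a maximal hyperplane $H$, i.e., without Conjecture~\ref{gmconj}, which Apozyan's example refutes. The paper's actual response to this obstruction is entirely different: rather than attempting the conjecture, it manufactures GC sets where the maximal hyperplanes are visible combinatorially --- it introduces maximal monomial hyperplanes and the monomial GC property (Definition~\ref{MGC}), shows these are detected by the simplicial complex (Theorem~\ref{MGCT}), and proves they survive specialization by a regular sequence (Theorem~\ref{CYdet}); your closing remark about exploiting the combinatorics of a simplicial complex is, in effect, pointing at that program, but it yields existence results for special families, not a proof of the conjecture.
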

\noindent de Boor shows that Conjecture~\ref{gmconj1} will require some additional hypothesis: he constructs a $GC_{3,2}$ set which does not
have four maximal hyperplanes. Apozyan \cite{Ap} used this to construct 
a $GC_{6,2}$ set with no maximal hyperplane, so 
Conjecture~\ref{gmconj} fails as stated. On the other hand,
\cite{AAK} shows Conjecture~\ref{gmconj} holds for $GC_{3,2}$ sets. 
Apozyan-Hakopian conjecture in \cite{Ap} that a $GC_{d,n}$ set contains at least ${d+1 \choose 2}$ maximal lines, 
which is proved for $d=3, n=2$ in \cite{Ap2}. We study $GC_{d,n}$
sets, focussing mainly on the case $d \ge 3$ and $n\ge 2$. Our
starting point is work of Sauer-Xu in
\cite{sx}  showing that the ideal $I_X$ of a $GC_{d,n}$ set $X$ is minimally generated in degree
$n+1$ by ${n+d \choose n+1}$ products of linear forms. 

The central idea of this paper is to lift the ideal $I_X$ of polynomials vanishing on $X$ to a monomial
ideal: by replacing the generators $\prod l_i$ of $I_X$ with monomials
$\prod y_i$ with a new variable $y_i$ for each distinct linear form, we
obtain insight into the combinatorial structure of GC sets: the
new monomial ideal is squarefree, so corresponds via Stanley-Reisner 
theory to a simplicial complex $\Delta$. The core of the paper is \S 3, where 
we apply Stanley-Reisner theory to analyze these ideals. 
Theorem~\ref{CYdet} shows that Bi-Cohen Macaulay squarefree monomial ideals of codimension 
$d$ and degree ${d+n \choose n}$ always specialize to $n$-correct sets of points.

As our goal is to obtain examples of GC sets,
we reverse engineer this process, by starting with a Bi-Cohen
Macaulay monomial ideal. While specializing yields a $n$-correct set, the 
GC condition is quite restrictive: most $n$-correct sets are not GC. 
To overcome this obstacle, we introduce an analog of
the GC property for monomial ideals. In Theorem~\ref{MGCT}, 
we prove a combinatorial criterion for a component of a monomial 
ideal to be GC. Example~\ref{CY4lines} below illustrates our
results in the $d=2$ case; additional examples appear in \S 4.
\begin{exm}\label{CY4lines}
A Chung-Yao {\em natural lattice} of six points in $\kk^2$ consists of 
the intersection points of four general lines $\{l_1,l_2,l_3,l_4\}$ in
the plane.
\begin{figure}
\begin{center}
\includegraphics[width=2.0in, height=2.0in]{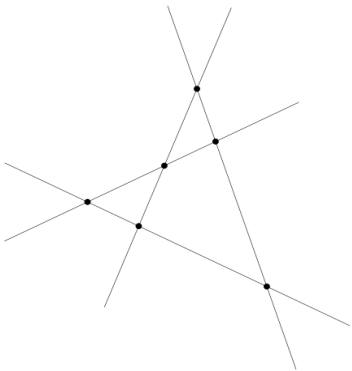}
\caption{Four general lines in the plane, and their intersection points}
\end{center}
\end{figure}
The ideal of $I_X = \langle l_1l_2l_3, l_1l_2l_4, l_1l_3l_4,
l_2l_3l_4\rangle$, so replacing $l_i$ with $y_i$ gives rise to the ideal
$I_\Delta  = \langle y_1y_2y_3, y_1y_2y_4,
y_1y_3y_4,y_2y_3y_4\rangle$. The ideal $I_\Delta$ has a decomposition
\begin{equation}\label{decomp1}
I_\Delta = \langle y_1,y_2\rangle \cap \langle y_1,y_3\rangle \cap
\langle y_1,y_4\rangle \cap \langle y_2,y_3\rangle \cap \langle
y_2,y_4\rangle \cap \langle y_3,y_4\rangle.
\end{equation}
The results in \S 3 show $\Delta$ consists of 4 vertices and 6 edges connecting 
them.

For this example a component $F=\langle y_i,y_j \rangle$ in Equation~\ref{decomp1} 
satisfies the monomial version of the GC condition appearing 
in Definition~\ref{MGC} if there is a quadratic monomial $f$
such that $f \not \in F$ but $f \cdot F \in I_\Delta$. For 
example when $F = \langle y_1,y_2\rangle$, choosing $f=y_3y_4$
satisfies the condition, and an easy check shows for the other
components $\langle y_i,y_j \rangle$ choosing $f=y_ky_l$ with
$\{i,j,k,l\} =\{1,2,3,4 \}$ works. Each of the
hyperplanes $y_i$ appears in 3 of the $\langle
y_i,y_j \rangle$; the $y_i$ are monomial versions of maximal 
hyperplanes.
Specializing $y_i \mapsto l_i$ preserves these properties, and 
reproves the well known fact that a Chung-Yao configuration
of $n+2$ lines in the plane is $GC_{2,n}$ and has $n+2$ maximal hyperplanes.
\end{exm}
\section{The vanishing ideal of a $GC_{d,n}$ set}
In this section, we show that a set of points $X$ having the $GC_{d,n}$ 
property is very special from an algebraic standpoint. Recall that for 
a set of points $X$, the set of polynomial functions vanishing on all
$p \in X$ is the vanishing ideal $I_X$ of $X$, and is closed under 
addition, as well as under multiplication by arbitrary polynomials.

The first step in our analysis of GC sets is to streamline the algebra 
by homogenizing the problem. Geometrically, this means we consider 
affine space $\R^d$ as a subset of projective space $\p_{\kk}^d$.
Since $\p_{\kk}^d$ may be thought of as  $\R^d \bigcup \p_{\kk}^{d-1}$,
where  $\p_{\kk}^{d-1}$ is the hyperplane at infinity and $X \subseteq \R^d$,
$X \cap \p_{\kk}^{d-1} = \emptyset$. We now demonstrate the 
utility of this construction.
\begin{exm}\label{HBexamples}
Suppose 
\[
X = (0,0) \cup (1,0) \cup (0,1) \subseteq \R^2.
\]
$I_X$ consists of the intersections of the ideal 
of the three points, so is 
\[
(x_0,x_1) \cap (x_0-1,x_1) \cap (x_0,x_1-1).
\]
If we embed $\R^2 \subseteq \p^2_{\R}$ as the plane with 
$x_2 = 1$, the points become
\[
(0:0:1) \cup (1:0:1) \cup (0:1:1), \mbox{ where the colon denotes
  projective coordinates;}
\]
since projective points can scale by any $\lambda \in \R^*$, the
corresponding ideal is 
\[
(x_0,x_1) \cap (x_0-x_2,x_1) \cap (x_0,x_1-x_2) = \langle x_0x_1,x_0(x_0-x_2),x_1(x_1-x_2)\rangle \subseteq \R[x_0,x_1,x_2].
\]
Why do this? The answer is that the ideal $\langle x_0x_1,x_0(x_0-x_2),x_1(x_1-x_2)\rangle $ is determinantal, that is, the generators are the $2 \times 2$ minors of the matrix
\[
\left[ \!
\begin{array}{cc}
x_0-x_2 & x_1-x_2\\
-x_1 & 0 \\
0 &  -x_0
\end{array}\! \right]
\]
This is not an accident: it can be shown that after homogenizing, any $GC_{2,n}$ set is 
generated by the maximal minors of a $n+2 \times n+1$ matrix of 
homogeneous linear forms. In Example~\ref{CY4lines}, $I_X$ is 
generated by the $3 \times 3$ minors of  
\[
d_2 = \left[ \!
\begin{array}{ccc}
l_4 & 0 & 0\\
-l_3&l_3 &0\\
0 & -l_2 & l_2\\
0&  0 & -l_1
\end{array} \! \right]
\]
However, there is even more structure here: the columns of the 
matrix $d_2$ are generators (over the polynomial ring) for the 
kernel of the matrix 
\[
d_1 = \left[ \!
\begin{array}{cccc}
l_1l_2l_3 & l_1l_2l_4 & l_1l_3l_4 & l_2l_3l_4
\end{array} \! \right].
\] 
\end{exm}
Relations on a matrix with polynomial entries are called {\em
  syzygies}. They can be represented by a vector of polynomials, 
and were systematically studied by Hilbert. For a $GC_{2,n}$ set $X$, there are three points to highlight:
\begin{itemize}
\item The generators for $I_X$ are products of linear forms. 
\item The first syzygies of $I_X$ are generated by vectors of linear forms.
\item The maximal minors of the syzygy matrix generate $I_X$.
\end{itemize}
The second two points are consequences of a famous theorem in 
commutative algebra, the {\em Hilbert-Burch} theorem, which 
describes the behavior of ideals which define sets of points in the
projective plane. Most of the remainder of this section is devoted to
defining these objects, and to understanding what happens for GC sets
in higher dimensions.

By our earlier remarks, we may assume the GC set $X$ consists of 
points in $\p^d_{\kk}$; $R$ will denote the ring $\kk[x_0,\ldots, x_d]$. 
For a point $p \in X$, 
\[
I_{p} = \langle l_{p,1},\ldots, l_{p,d}\rangle
\] is 
generated by $d$ independent homogeneous linear forms. We use ${\mathcal Q}$ 
to denote the ideal $\langle Q_p, p \in X\rangle$, with $Q_p$ as 
in Definition~\ref{gcn}.
 
In algebraic geometry, a set of points $X$ {\em imposes independent
conditions} on polynomials of degree $n$ if the rank of the 
evaluation map is equal to $|X|$. So an $n$-correct set in $\kk^d$ is
a set of ${d+n \choose n}$ points which imposes independent conditions 
in degree $n$. Let $X \subseteq \p^d$ be a set of $N={d+n \choose n}$ 
distinct points having property $GC_{d,n}$.

\begin{lem}\label{iic}
The ideal ${\mathcal Q}$ is of the form $\langle x_0,\ldots, x_d \rangle^n$.
\end{lem}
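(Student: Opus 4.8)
The plan is to prove equality with $\mathfrak{m}^n$, where $\mathfrak{m} = \langle x_0, \ldots, x_d \rangle$, by establishing both inclusions; the heart of the matter is a dimension count paired with linear independence of the generators $Q_p$. First I would note that the inclusion $\mathcal{Q} \subseteq \mathfrak{m}^n$ is immediate: after homogenizing, each $Q_p = \prod_{k=1}^n l_k$ is a product of $n$ homogeneous linear forms, hence a homogeneous polynomial of degree $n$ lying in $\mathfrak{m}^n$, and $\mathcal{Q}$ is generated by these forms.

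For the reverse inclusion, the key object is the graded piece in degree $n$. Since $\mathfrak{m}^n$ is generated in degree $n$ by the full space $R_n$ of degree-$n$ forms, it suffices to show $\mathcal{Q}_n = R_n$; then $\mathfrak{m}^n = R \cdot R_n \subseteq \mathcal{Q}$, and combined with the first inclusion this yields equality. Now $\dim_{\kk} R_n = {d+n \choose n} = N = |X|$, so the goal becomes showing that the $N$ forms $\{Q_p : p \in X\}$ are linearly independent, forcing them to span $R_n$.

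The linear independence is where the GC condition does the work. I would choose affine representatives $\tilde{q} \in \kk^{d+1}$ for each $q \in X$, normalizing the last coordinate to $1$, which is legitimate because $X$ avoids the hyperplane at infinity. Evaluating the homogenized $Q_p$ at such a representative recovers the affine value, so $Q_p(\tilde{q}) = \delta_{pq}$ by Definition~\ref{gcn}. Given any dependence $\sum_p c_p Q_p = 0$, evaluating at $\tilde{q}$ yields $c_q = 0$ for every $q$. Hence the $Q_p$ are independent, $\mathcal{Q}_n = R_n$, and the lemma follows.

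The main obstacle is really just bookkeeping at the affine-to-projective interface: one must verify that homogenizing the affine linear factors and then evaluating at the chosen representatives $\tilde{q}$ are compatible operations, so that the affine Kronecker-delta property transfers faithfully to the homogeneous setting. Once this compatibility is set up correctly, the independence argument collapses to a single evaluation, and no further computation is needed.
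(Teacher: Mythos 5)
Your proof is correct and follows essentially the same route as the paper: the inclusion $\mathcal{Q} \subseteq \langle x_0,\ldots,x_d\rangle^n$ from the degrees of the $Q_p$, then linear independence of the $Q_p$ (forced by $Q_p(q)=\delta_{pq}$) together with the dimension count $\dim_{\kk} R_n = {n+d \choose d} = |X|$ to conclude equality in degree $n$. The paper simply leaves implicit the evaluation-at-representatives argument and the affine-to-projective bookkeeping that you spell out.
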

\begin{proof}
Since the $Q_p$ are all of degree $n$, clearly ${\mathcal Q} \subseteq \langle  x_0,\ldots, x_d \rangle^n$. 
The condition that $Q_p(q) = \delta_{pq}$ means that the $Q_p$
are linearly independent; since the dimension of $\langle x_0,\ldots, x_d  
\rangle^n$ is ${ n+d \choose d}$, equality holds. this means $GC_{d,n}$ sets are $n$-correct. 
\end{proof}

\begin{lem}\label{IC}
Suppose $X$ has the $GC_{d,n}$ property. Then for each $p \in X$, there are $d$ linearly independent linear forms $l_{p,1}, \ldots, l_{p,d}$ with each $l_{p,j}$ dividing some $Q_q$, $p \ne q$, such
that $l_{p,j}(p)=0$.
\end{lem}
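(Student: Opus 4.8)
The plan is to show that the linear forms which divide the various $Q_q$ and vanish at $p$ already span the whole space of linear forms through $p$, and then to extract the required $d$ forms as a basis. Let $V_p$ denote the $d$-dimensional space of linear forms $\ell$ with $\ell(p)=0$, and let $\mathcal{L}_p \subseteq V_p$ be the set of those $\ell$ that divide $Q_q$ for some $q \ne p$. The first step is elementary: for each $q \ne p$ the defining property $Q_q(p)=\delta_{pq}=0$ forces the product $Q_q=\prod_k l_k$ to vanish at $p$, and since a product of reals vanishes only if a factor does, some linear factor $l_k$ satisfies $l_k(p)=0$. That factor lies in $\mathcal{L}_p$, so every $Q_q$ with $q \ne p$ is divisible by a form of $\mathcal{L}_p$.

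The heart of the argument is to prove that $\mathcal{L}_p$ spans $V_p$, which I would do by contradiction. Suppose $W:=\mathrm{span}(\mathcal{L}_p)$ has dimension at most $d-1$. Then the common zero locus $L$ of $W$ is a linear subspace of $\p^d$ of dimension at least $1$ containing $p$. By the first step each $Q_q$ with $q \ne p$ is divisible by a form of $W$, hence vanishes identically on $L$. Now I invoke $n$-correctness through \lemref{iic}: the $Q_q$ are linearly independent and number $N={d+n \choose n}=\dim(\text{degree-}n\text{ forms})$, so they form a basis of the space of degree-$n$ forms. The $N-1$ forms $\{Q_q : q \ne p\}$ are then a basis of the hyperplane consisting of degree-$n$ forms vanishing at $p$. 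Consequently \emph{every} degree-$n$ form vanishing at $p$ would have to vanish on all of $L$.

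This conclusion is false, and exhibiting a counterexample finishes the proof. Choose a point $r \in L$ with $r \ne p$; since $p \ne r$ there is a linear form $\ell$ with $\ell(p)=0$ and $\ell(r)\ne 0$, and a linear form $m$ with $m(r)\ne 0$. The degree-$n$ form $g=\ell\cdot m^{n-1}$ satisfies $g(p)=0$ but $g(r)=\ell(r)m(r)^{n-1}\ne 0$, contradicting the previous paragraph. Hence $\dim W=d$, so $W=V_p$, and I may select $d$ linearly independent forms $l_{p,1},\ldots,l_{p,d}$ from $\mathcal{L}_p$; by construction each vanishes at $p$ and divides some $Q_q$ with $q \ne p$, as required.

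I expect the main obstacle to be precisely the linear independence claim, i.e.\ that the factors through $p$ span all of $V_p$ rather than being trapped in a proper subspace. The purely combinatorial content of the first step only guarantees a \emph{single} factor of each $Q_q$ through $p$, and these could a priori all share a common zero beyond $p$; it is the input from $n$-correctness, via the basis and dimension count of \lemref{iic}, that rules this out and upgrades ``vanishes at $p$'' to ``cuts out only $p$.''
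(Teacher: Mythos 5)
Your proof is correct, and its skeleton coincides with the paper's: both arguments note that every $Q_q$ with $q \ne p$ must have a linear factor vanishing at $p$, assume for contradiction that the span $W$ of all such factors has dimension at most $d-1$, and derive a contradiction from Lemma~\ref{iic}. Where you genuinely differ is in how the contradiction is extracted. The paper stays algebraic: it changes coordinates so that $W$ lies in $\langle x_0,\ldots,x_m\rangle$ with $m \le d-2$, writes ${\mathcal Q} = \langle Q_p \rangle + \langle P \rangle$ with $\langle P \rangle$ inside that smaller ideal, and observes that the $n+1$ monomials $x_{d-1}^n, x_{d-1}^{n-1}x_d, \ldots, x_d^n$ lie in ${\mathcal Q}_n$ (which equals all of $R_n$ by Lemma~\ref{iic}) yet meet $\langle x_0,\ldots,x_m\rangle_n$ trivially, so they would have to be spanned by the single form $Q_p$ --- impossible since $n+1 \ge 2$. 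You instead pass to the geometric dual: the common zero locus $L$ of $W$ is a positive-dimensional linear subspace through $p$, every $Q_q$ with $q \ne p$ vanishes on $L$, and since Lemma~\ref{iic} identifies $\mathrm{span}\{Q_q : q \ne p\}$ with the full hyperplane of degree-$n$ forms vanishing at $p$, every such form would vanish on $L$; the explicit witness $\ell\, m^{n-1}$, nonzero at a point $r \in L \setminus \{p\}$, kills this. Both routes are dimension counts powered by $n$-correctness, but yours trades the paper's coordinate change and monomial count for a single explicit counterexample form, which makes the contradiction more concrete and self-contained; the paper's version has the advantage of staying inside the ideal-theoretic framework (decompositions of ${\mathcal Q}$) that it continues to use in the proof of Theorem~\ref{icix}.
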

\begin{proof}
The $GC_{d,n}$ property implies that each $Q_p$ with $p \ne q$ has a factor which is a linear form passing thru $p$. Let $L$ be the vector space generated by all such linear factors, and suppose $L$ has dimension less than $d$. Changing coordinates, we can suppose $L = \{x_0, \ldots, x_m\}$ with $m \le d-2$. But then 
\[
{\mathcal Q} = \langle Q_p \rangle + \langle P\rangle,
\]
where $\langle P\rangle = {\mathcal Q}_n \cap \langle x_0,\ldots, x_m \rangle$. 
This is impossible, because by Lemma~\ref{iic}, 
${\mathcal Q} = \langle x_0, \ldots, x_d \rangle^n$ and 
\[
\{x_{d-1}^n,x_{d-1}^{n-1}x_d, \ldots, x_{d-1}x_d^{n-1}, x_d^n\} \subseteq {\mathcal Q}_n
\]
is $n+1$ dimensional and disjoint from the degree $n$ component of the 
subideal of ${\mathcal Q}$ generated by $\langle P \rangle$, 
and clearly cannot be spanned by $Q_p$. 
\end{proof}

\subsection{Minimal free resolutions}
The polynomial ring $R= \kk[x_0,\ldots,x_d]$ 
is a $\Z-$graded ring: $R_i$ is the vector space of homogeneous
polynomials of degree $i$, and if $r_j \in R_j$ and $r_i \in R_i$ 
then $r_i\cdot r_j \in R_{i+j}$. As $R_0 = \R$, this means each $R_i$ has the
structure of an $R_0 = \R$ vector space, of dimension ${n+d \choose d}$, and $R=\oplus_{i} R_i$. A finitely generated graded $R$-module $N$ admits a similar decomposition; if $s \in R_p$ and $n \in N_q$ then $s\cdot n \in N_{p+q}$. In particular, each $N_q$ is a $R_0 = \kk$-vector space. A graded map of graded modules $M \rightarrow N$ preserves the grading, so takes $M_i \rightarrow N_i$. 
\begin{defn}\label{SmodN}
For a finitely generated graded $S$-module $N$, the Hilbert function 
is $HF(N,t) = \dim_{\kk}N_t$, and the Hilbert series is $HS(N,t) = \sum \dim_{\kk}N_q t^q$.
\end{defn}
For $t \gg 0$, the Hilbert function of  $N$ is a polynomial in $t$,
called the Hilbert polynomial $HP(N,t)$, of degree at most $d$
(\cite{s}, Theorem 2.3.3). 
For $X \subseteq \p^d$, we define $\codim(I_X)$ as $d-\deg(HP(R/I_X,t))$. The degree of $HP(R/I_X,t)$ is the dimension of $X$. When $X$ is a set of points in $\p^d$, $I_X = \cap P_i$ with $P_i  = \langle l_{i1},\ldots, l_{id}\rangle$ and the codimension of $I_X$ is $d$.
\begin{defn} A free resolution for an $R$-module $N$ is an exact sequence 
\[
\mathbb{F} : \cdots \rightarrow F_i \stackrel{d_i}{\rightarrow} F_{i-1}\rightarrow \cdots \rightarrow F_0  \rightarrow N \rightarrow
 0,
\]
where the $F_i$ are free $R$-modules. 
\end{defn}
If $N$ is graded, then the $F_i$ are also graded, so letting 
$R(-m)$ denote a rank one free module generated in degree $m$, we may 
write $F_i = \oplus_j R(-j)^{a_{i,j}}$. By the Hilbert syzygy theorem \cite{s} a finitely generated, graded $R$-module $N$ has 
a free resolution of length at most $d+1$, with all the $F_i$ of finite 
rank. Since
\[
\begin{array}{ccc}
HS(R(-i),t) & = & \frac{t^i}{(1-t)^{d+1}}\\
HP(R(-i),t) & = & {t+d -i \choose d}
\end{array}
\]
this means we can read off the Hilbert series, function and polynomial from a free resolution as an alternating sum, which is illustrated in Example~\ref{freeresex}.
\begin{defn}\label{MFR} For a finitely generated graded $R$-module 
$N$, a free resolution is minimal if for each $i$,
$\im(d_i) \subseteq \mathfrak{m}F_{i-1}$, where 
$\mathfrak{m}=\langle x_0,\ldots,x_d\rangle$. 
The Castelnuovo-Mumford 
regularity of $N$ is $\max_{i,j} \{a_{i,j}-i\}$. The projective dimension $\pdim(N)$ of $N$ is the length of a minimal free resolution of $N$.
\end{defn}
\begin{exm}\label{freeresex}
For the $R=\R[x_0,x_1,x_2]$ module $R/\langle x_0^2,x_1^2\rangle$, the graded free
resolution is 
\[
0 \longrightarrow R(-4) \xrightarrow{\left[ \!
\begin{array}{c}
-x_1^2\\
x_0^2
\end{array}\! \right]} R(-2)^2
\xrightarrow{\left[ \!\begin{array}{cc}
x_0^2& x_1^2
\end{array}\! \right]}
 R \longrightarrow R/I\longrightarrow 0, 
\]
and for $I_X$ of Example~\ref{CY4lines}, the free resolution is 
\[
0 \rightarrow R(-4)^3 \stackrel{d_2}{\rightarrow} R^4(-3)\stackrel{d_1} \rightarrow R \rightarrow R/I \rightarrow 0,
\]
with $d_i$ as in Example~\ref{HBexamples}; the $d_1$ map is a $1 \times 4$ matrix with cubic entries, giving a map $R^4\rightarrow R^1$. Because we want graded maps, the generators of $R^4$ must appear in degree $3$, explaining the module $R^4(-3)$. So for $X$ the Chung-Yao set of Example~\ref{CY4lines}, we see that the Hilbert series and Hilbert polynomial are
\[
\begin{array}{ccc}
HS(R/I_X,t) &= &\frac{1-4t^3+3t^4}{(1-t)^3}\\
HP(R/I_X,t) &= &{t+2 \choose 2} - 4{t+2 -3 \choose 2} +3{t+2 -4 \choose 2} = 6, 
\end{array}
\]
as expected, since the Hilbert polynomial of a $GC_{d,n}$ set $X$ is $|X| = {d+n \choose n}$.
\end{exm}
While the differentials which appear in a minimal free resolution 
of $N$ are not unique, the ranks and degrees of the free modules which 
appear are unique. 

\begin{defn}\label{CM}
An ideal $I \subseteq R$ is Cohen-Macaulay if $\codim(I) = \pdim(R/I)$.
\end{defn}
\begin{exm}
The two ideals in Example~\ref{freeresex} both have $\pdim(R/I) = 2$; because
the ideals define zero dimensional subsets of the plane they are codimension two, so both ideals are Cohen-Macaulay. This is a general phenomenon: the ideal $I_X$ of a set of points $X \subseteq \p^d$ is Cohen-Macaulay, of codimension $d$.
\end{exm}
Definition~\ref{CM} is hard to digest, but the Cohen-Macaulay condition 
has many useful consequences, see Chapter 10 of \cite{s}. The Hilbert-Burch theorem states that a codimension two Cohen-Macaulay ideal ${\mathcal I} = \langle f_1, \ldots, f_m\rangle$ is generated by the maximal minors of an $m \times m-1$ matrix, whose columns are a basis for the syzygies on ${\mathcal I}$. To generalize the Hilbert-Burch theorem to codimension greater than two, we need 
the {\em Eagon-Northcott} complex:
\begin{defn}\label{ENdef}
Let $R^m \simeq F \stackrel{\phi}{\rightarrow} G \simeq R^n$ be a homomorphism of 
$R$-modules, with $m \ge n$. Then $\phi$ induces a homomorphism
\[
\Lambda^n(F) \stackrel{\Lambda \phi}{\longrightarrow}\Lambda^n(G)=R,
\]
where the entries of $\Lambda \phi$ are the $n \times n$ minors of 
$\phi$. With suitable conditions (see \cite{p}) on 
$\phi$, the ideal $I_\phi$ of $n \times n$ minors has a minimal 
free resolution, in which the free modules are tensor products of exterior and symmetric powers:
\[
\cdots \longrightarrow 
S_2(G^*) \otimes \Lambda^{n+2}(F) \longrightarrow
 S_1(G^*) \otimes \Lambda^{n+1}(F) \stackrel{d_1}{\longrightarrow} 
\Lambda^{n}(F) \longrightarrow
 R \longrightarrow R/I_\phi \longrightarrow 0.
\]
The key map is $d_1$: since $\phi^*: G^* \rightarrow F^*$, for $\alpha \in G^*$,
 $\phi^*(\alpha) \in F^*$, and 
\[
d_1(\alpha \otimes e_1 \wedge \cdots \wedge e_{n+1}) = \sum\limits_{j=1}^{n+1} (-1)^j (\phi^*(\alpha)(e_j))\cdot e_1 \wedge \cdots \wedge \widehat{e_{j}} \wedge \cdots \wedge e_{n+1},
\]
with higher differentials defined similarly. 
\end{defn}

\subsection{The ideal of a $GC_{d,n}$ set is generated by products of
  linear forms}
We start with an algebraic proof of the following key result of Sauer-Xu \cite{sx}, which is a main ingredient in this paper.
\begin{thm}\label{icix}
If $X\subseteq \kk^d$ is a $GC_{d,n}$ set, then the ideal $I_X$ is generated in degree $n+1$ by ${n+d \choose n+1}$ products of linear forms.
\end{thm}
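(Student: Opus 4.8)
The plan is to show that $I_X$ is generated in degree $n+1$ by products of linear forms, and that the number of such generators is ${n+d \choose n+1}$.The plan is to prove three separate facts and then combine them: (i) that $(I_X)_t = 0$ for $t \le n$ while $\dim_{\kk}(I_X)_{n+1} = {n+d \choose n+1}$; (ii) that the degree $n+1$ component $(I_X)_{n+1}$ is spanned by products of $n+1$ linear forms; and (iii) that $I_X$ is generated entirely in degree $n+1$. Granting these, graded Nakayama identifies the minimal generators of $I_X$ with a $\kk$-basis of $(I_X)_{n+1}$, and by (ii) such a basis can be chosen to consist of ${n+d \choose n+1}$ products of linear forms, which is exactly the assertion. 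Throughout I work in the homogenized picture $X \subseteq \p^d$, $R = \kk[x_0,\ldots,x_d]$, as licensed by the earlier reductions.

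For step (i), since $X$ is $n$-correct the evaluation map $R_n \to \kk^{|X|}$ is an isomorphism, so $(I_X)_n = 0$; as $R$ is a domain this forces $(I_X)_t = 0$ for every $t \le n$. Because $X$ is a reduced set of points, $HF(R/I_X,t)$ is non-decreasing in $t$ and bounded above by $|X| = {n+d \choose d}$, and it already attains this value at $t = n$; hence it equals $|X|$ for all $t \ge n$. Consequently $\dim_{\kk}(I_X)_{n+1} = \dim_{\kk}R_{n+1} - |X| = {n+1+d \choose d} - {n+d \choose d} = {n+d \choose n+1}$ by Pascal's identity.

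The crux is step (ii), and the key observation is read off from \lemref{iic}: the forms $Q_p$ constitute a $\kk$-basis of $R_n$. Indeed they are linearly independent because $Q_p(q) = \delta_{pq}$, and they span ${\mathcal Q}_n = (\langle x_0,\ldots,x_d\rangle^n)_n = R_n$; there are $|X| = \dim_{\kk}R_n$ of them, so they form a basis. Since $R_{n+1} = R_1 \cdot R_n$, every $f \in (I_X)_{n+1}$ may be written as $f = \sum_{p \in X} l_p Q_p$ with each $l_p$ a linear form. Evaluating at a point $q \in X$ gives $0 = f(q) = \sum_{p} l_p(q) Q_p(q) = l_q(q)$, so $l_p(p) = 0$ for every $p$. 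But $(l_p Q_p)(q) = l_p(q)\delta_{pq}$, so $l_p(p) = 0$ already forces $l_p Q_p \in I_X$; thus $f$ is displayed as a sum of products of $n+1$ linear forms, each lying in $I_X$. Therefore $(I_X)_{n+1}$ is spanned by products of linear forms, and a basis of ${n+d \choose n+1}$ such products can be extracted.

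For step (iii) I would invoke Castelnuovo-Mumford regularity (\defref{MFR}). The ring $R/I_X$ is Cohen-Macaulay of dimension one, and for such a ring the regularity equals the least degree beyond which its Hilbert function is constant (\cite{s}); by step (i) this degree is $n$, so $\mathrm{reg}(R/I_X) = n$. Reading the minimal resolution $F_i = \oplus_j R(-j)^{a_{i,j}}$ of $R/I_X$, the generators of $I_X$ occur in $F_1$ with degrees $j$ satisfying $j - 1 \le \mathrm{reg}(R/I_X)$, hence $j \le n+1$; together with $(I_X)_{\le n} = 0$ this gives $I_X = \langle (I_X)_{n+1}\rangle$, generated in the single degree $n+1$. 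I expect the main obstacle to be step (ii): the whole argument hinges on recognizing the $Q_p$ as a basis of $R_n$, after which the $\delta_{pq}$-evaluation trick mechanically converts an arbitrary degree $n+1$ element of $I_X$ into a sum of products of linear forms. By contrast, step (i) is a direct Hilbert-function computation and step (iii) is routine once the regularity of a set of points is used.
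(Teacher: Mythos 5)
Your proof is correct, but its core runs along a genuinely different track from the paper's. The bookkeeping in your steps (i) and (iii) --- that $(I_X)_{\le n}=0$, that $\dim_{\kk}(I_X)_{n+1}={n+d \choose n+1}$, and that $I_X$ is generated in the single degree $n+1$ --- matches the paper's appeal to independent conditions and Theorem 7.1.8 of \cite{s}. The divergence is in the essential step, showing that $(I_X)_{n+1}$ is spanned by products of linear forms. The paper builds the subideal $I_C=\langle Q_p\, l_{p,j}\rangle$, where the $l_{p,j}$ are the forms produced by Lemma~\ref{IC} (linear forms through $p$ that divide some $Q_q$), and then forces $I_C=I_X$ by a dimension count: all linear syzygies on ${\mathcal Q}=\langle x_0,\ldots,x_d\rangle^n$ are of Eagon--Northcott type (via \cite{ek}), so the $d\,|X|$ products admit at most $n{n+d \choose n+1}$ linear relations, giving $\dim (I_C)_{n+1}\ge {n+d \choose n+1}=\dim (I_X)_{n+1}$. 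You instead exploit Lemma~\ref{iic} to recognize the $Q_p$ as a $\kk$-basis of $R_n$, write an arbitrary $f\in (I_X)_{n+1}$ as $\sum_p l_pQ_p$, and use evaluation at the points of $X$ to conclude $l_p(p)=0$ for every $p$, so that each summand $l_pQ_p$ is individually a product of $n+1$ linear forms lying in $I_X$. Your route is more elementary and self-contained: it needs neither Lemma~\ref{IC} nor the Eagon--Northcott/Eliahou--Kervaire machinery, and it sidesteps the somewhat delicate counting of relations (an inequality whose direction depends on bounding the possible syzygies). What the paper's construction buys in exchange is structure: its spanning set $\{Q_p\, l_{p,j}\}$ uses only linear forms that occur as factors of the $Q_q$'s, a canonical finite list that feeds directly into the monomialization of \S 3 (one variable per distinct factor) and is of a piece with the Eagon--Northcott shape of the resolution established in Proposition~\ref{MFRL}, whereas your spanning set allows arbitrary linear forms through each point.
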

\begin{proof}
Let $I_C = \langle Q_p\cdot l_{pj}, p \in X, j\in \{1,\ldots,d\}\rangle$, with
$l_{p,j}$ as in Lemma~\ref{IC}. Because $X$ is a set of distinct points in $\p^d$,
$I_X$ is Cohen-Macaulay and codimension $d$. Since $Q_p(q) = \delta_{pq}$,
the points of $X$ impose independent conditions (see \cite{s},
Chapter 7) on polynomials of degree $n$, so $I_X$ is generated 
in degree $>n$. As $\dim_{\kk}R_{n+1} = { n+1+d \choose d}$ and the ${n+d \choose d}$ points impose independent conditions, by Theorem 7.1.8 of \cite{s}, 
$I_X$ is generated by 
\[
{ n+1+d \choose d}- {n+d \choose d} = {n+d \choose n+1} 
\]
polynomials of degree $n+1$. 

By construction, every polynomial in $I_C$ is a product of linear
forms of degree $n+1$ and vanishes on $X$, so $I_C \subseteq I_X$.
It suffices to show that the dimension of $I_C$ in degree $n+1$
is ${n+d \choose n+1}$. There are relations among the generators of $I_C$:
\begin{equation}\label{SYZ}
\sum\limits_{i=1}^N Q_i (\sum\limits_{j=1}^d a_{i_j}l_{i_j}) =0,
\end{equation}
with the $a_{i_j} \in \kk$. Such a relation is a {\em linear syzygy} on ${\mathcal Q} = \langle x_0,\ldots, x_d\rangle^n$. By \cite{ek}, ${\mathcal Q}$ has a minimal 
free resolution of Eagon-Northcott type; in particular, ${\mathcal Q}$ is generated 
by the $n \times n$ minors of an $(n+d) \times n$ matrix 
whose entries are the variables of $R$. As a consequence,
all linear syzygies are Eagon-Northcott type syzygies, that is,
the image of the leftmost map below:
\[
 S_1(R^n) \otimes \Lambda^{n+1}(R^{n+d}) \longrightarrow
 \Lambda^{n}(R^{n+d}) \longrightarrow \Lambda^{n}(R^{n}) =R
 \longrightarrow R/{\mathcal Q} \longrightarrow 0.
\]
So there are $n \cdot {n+d \choose n+1}$ minimal linear 
first syzygies on ${\mathcal Q}$. The minimal value for 
$dim (I_C)_{n+1}$ is achieved if these syzygies occur in Equation~\ref{SYZ}, so 
\[
\begin{array}{ccc}
\dim (I_C)_{n+1} &\ge &d \cdot N- n \cdot {n+d \choose n+1}\\
              & =  &d \cdot {n+d \choose n}- n \cdot {n+d \choose n+1}\\
              & =  & {n+d \choose n+1}\\
              & =  &\dim (I_X)_{n+1}
\end{array}
\]
Since $I_C \subseteq I_X$ and both are generated in degree $n+1$, 
we have $I_C = I_X$.
\end{proof}
An important related result is the next proposition; while the proof is technical the meaning is very concrete: if $X$ is a $GC_{d,n}$ set, then all the matrices in the minimal free resolution have entries of degree at most one: that is, they are matrices of linear forms, just as in the case where $d=2$.
\begin{prop}\label{MFRL}
The minimal free resolution of $I_X$ has the same graded free modules as an Eagon-Northcott resolution of a generic $(n+d) \times (n+1)$ matrix.
\end{prop}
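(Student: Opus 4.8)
The plan is to first write down the resolution I am aiming for, and then show the minimal free resolution of $R/I_X$ is forced to have the same shape. For a generic $(n+d)\times(n+1)$ matrix $\psi$ of linear forms, the ideal of maximal minors has codimension $(n+d)-(n+1)+1=d$, so by Definition~\ref{ENdef} the Eagon--Northcott complex is its minimal free resolution; since $\psi$ has linear entries each term is generated in a single degree, $F_i=R(-(n+i))^{a_i}$ with $a_i=\operatorname{rank}\big(S_{i-1}((R^{n+1})^{*})\otimes\Lambda^{\,n+i}(R^{n+d})\big)$, so this resolves a Cohen--Macaulay quotient of codimension $d$ with the strictly increasing degree sequence $(0,n+1,n+2,\dots,n+d)$. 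My goal is to prove that the minimal free resolution of $R/I_X$ is also pure with this degree sequence.

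First I would compute the Hilbert function. By Theorem~\ref{icix}, $(I_X)_t=0$ for $t\le n$, so $HF(R/I_X,t)=\dim_{\kk}R_t={t+d\choose d}$ in that range, attaining the value $N={n+d\choose d}$ exactly at $t=n$; and since the points impose independent conditions in degree $n$, $HF(R/I_X,t)=N$ for all $t\ge n$. Hence $n$ is the least degree in which the Hilbert function attains $N$. The crux is then linearity. Since $X$ is a reduced zero-dimensional scheme, $R/I_X$ is Cohen--Macaulay of dimension $1$, so $H^0_{\mathfrak{m}}(R/I_X)=0$ and the only other local cohomology module $H^1_{\mathfrak{m}}(R/I_X)$ has graded pieces of dimension $N-HF(R/I_X,t)$, which vanish precisely for $t\ge n$. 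Therefore $\operatorname{reg}(R/I_X)=n$; equivalently $I_X$ has a linear minimal free resolution.

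Purity now follows formally. In the minimal resolution $0\to F_d\to\cdots\to F_1\to R\to R/I_X\to 0$, which has length $d$ because $I_X$ is Cohen--Macaulay of codimension $d$, Theorem~\ref{icix} gives $F_1=R(-(n+1))^{b_1}$; minimality forces the least generator degree of $F_i$ to strictly exceed that of $F_{i-1}$, so every generator of $F_i$ has degree $\ge n+i$, while $\operatorname{reg}(R/I_X)=n$ forces degree $\le n+i$ (Definition~\ref{MFR}). Hence $F_i=R(-(n+i))^{b_i}$, and $R/I_X$ has a pure resolution with degree sequence $(0,n+1,\dots,n+d)$. Finally, the graded Betti numbers of a Cohen--Macaulay module with a pure resolution are determined by its codimension and degree sequence alone (the Herzog--K\"uhl equations), so the $b_i$ must agree with the $a_i$ of the generic Eagon--Northcott resolution, which shares both invariants. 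The step I expect to be the genuine obstacle is the linearity claim $\operatorname{reg}(R/I_X)=n$: equality of Hilbert functions together with the Cohen--Macaulay property does not by itself pin down the Betti numbers, and an upper-semicontinuity argument only bounds them below by the generic values, so one really needs the regularity computation to exclude extra non-generic syzygies.
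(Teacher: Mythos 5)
Your proof is correct and follows essentially the same route as the paper: both establish that $X$ is $(n+1)$-regular from the independent-conditions property (your $H^1_{\mathfrak{m}}(R/I_X)$ computation is just the local-cohomology form of the paper's identification of $H^1(\mathcal{I}_X(i-1))$ with the cokernel of the evaluation map), conclude that the minimal free resolution is pure with degree sequence $(0,n+1,\dots,n+d)$, and then pin down the ranks numerically. Your appeal to the Herzog--K\"uhl equations is a packaged version of the paper's final step of ``comparing to the Hilbert series.''
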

\begin{proof}
By Theorem 7.1.8 of \cite{s}, the 
Castelnuovo-Mumford regularity of $I_X$ is the smallest 
$i$ such that $H^1(\mathcal{I}_X(i-1))=0$; because the points
impose independent conditions and $H^1(\mathcal{I}_X(i-1))$ 
is the cokernel of the evaluation map on polynomials of degree
$i-1$, the $GC_{d,n}$ property means $X$ is $n+1$ regular. Therefore 
the minimal free resolution of $I_X$ has the form
\[
0 \rightarrow 
R^{a_d}(-d-n) \rightarrow 
R^{a_{d-1}}(-d-n+1) \rightarrow 
\cdots
R^{a_{1}}(-n-1) \rightarrow 
R \rightarrow R/I_X \rightarrow 0,
\]
so every differential is a matrix of linear forms. Since the 
points impose independent conditions, comparing to the Hilbert 
series yields the result.
\end{proof}
\begin{defn} We call an ideal $I$ determinantal if $I$ is generated by the 
$r \times r$ minors of a $m \times r$ matrix, with $m \ge r \ge 2$.
\end{defn}
\begin{exm}
For a set of points $X \subseteq \p^2$, the Hilbert-Burch theorem \cite{p} shows that 
$I_X$ is determinantal, with $m=n+2, r=n+1$. This fails in higher dimension: the
ideal for ten general points in $\p^3$ has 
a minimal free resolution of the form
\[
0 \longrightarrow R(-5)^6 \longrightarrow R(-4)^{15} \longrightarrow
R(-3)^{10 }\longrightarrow R \longrightarrow R/I \longrightarrow 0.
\]
\noindent So $I_X$ has 10 cubic generators, 15 linear first syzygies,
and 6 linear second syzygies. However, it is not determinantal
\cite{Gorla}. By Proposition~\ref{MFRL} the graded free modules are
the same as those of a $GC_{3,2}$ set; by Theorem~\ref{CYdet} $I_X$ is
determinantal if $X$ is Chung-Yao. Question: are $GC_{d,n}$ sets
always determinantal?
\end{exm}
\section{Bi-Cohen Macaulay simplicial complexes}
By Theorem~\ref{icix}, the ideal $I_X$ can be generated by products of
linear forms, and our strategy is to relate $I_X$ to
a monomial ideal. Because the forms appearing in any generator
$F$ of $I_X$ are distinct, the monomial ideal is actually squarefree.
Such ideals are related to the combinatorics of simplicial complexes.
\subsection{Simplicial complexes and Stanley-Reisner ring}
\begin{defn}\label{SRring}\cite{s}
A simplicial complex $\Delta$ on a vertex set $V$ is a collection of subsets
$\sigma$ of $V$, such that if $\sigma \in \Delta$ and $\tau \subset \sigma$,
then $\tau \in \Delta$. If $|\sigma| = i+1$ then $\sigma$ is called an $i-$face.
\end{defn}
Let $f_i(\Delta)$ be the number of $i$-faces of $\Delta$, and $\dim(\Delta) = \max\{i \mid f_i(\Delta) \ne 0\}$. If $\dim(\Delta) = n-1$,
let $f_\Delta(t) = \sum_{i=0}^n f_{i-1}t^{n-i}$, with $f_{-1}=1$ for the empty face. The reverse ordered list of
coefficients of $f_\Delta(t)$ is the $f$-vector $f(\Delta)$ of $\Delta$.
\begin{defn}\label{Adual}
The Alexander dual $\Delta^\vee$ of $\Delta$ is the simplicial complex
\[
\Delta^\vee = \{\tau^\vee \mid \tau \not\in \Delta \}, \mbox{ where }\tau^\vee \mbox{ denotes the complement }V \setminus \tau.
\] 
\end{defn}
\begin{defn}
Let $\Delta$ be a simplicial complex on vertices $\{ y_1, \ldots,y_n \}$.
The Stanley-Reisner ideal $I_\Delta$ is
\[
I_\Delta = \langle y_{i_1}\cdots y_{i_j} \mid \{y_{i_1},\ldots,y_{i_j}\} \mbox{ is not a face of }\Delta \rangle \subseteq S=\kk[y_1,\ldots y_n],
\]
and the Stanley-Reisner ring is $\kk[y_1,\ldots y_n]/I_\Delta$.
\end{defn}
The Stanley-Reisner ideal $I_{\Delta^\vee}$ of $\Delta^\vee$ is
obtained by monomializing the primary decomposition of $I_\Delta$: for each primary component $P_i$ in the primary decomposition, take the product of the terms in the component.
So if
\[
I_\Delta = \bigcap\limits_j P_j \mbox{ with } P_j = \langle y_{j_1},\ldots, y_{j_d} \rangle,
\]
then the minimal generators of $I_{\Delta^\vee}$ are of the form $y_{j_1}\cdots y_{j_d}$.
\begin{defn}
The $j-1$ skeleton of a $i-1$ simplex has as maximal faces all $j$ tuples on a set of $i$ vertices. Denote this complex by $\Delta(i,j)$. The Stanley-Reisner ideal $I_{\Delta(i,j)}$ is generated by all square-free monomials of degree $j+1$ in $i$ variables. 
\end{defn}
\begin{exm}\label{1SkelTet}
Figure 2 shows $\Delta(4,2)$. %Label the vertices $\{x_1,x_2,x_3,x_4\}$.
\begin{figure}
\begin{center}
\includegraphics[width=0.3\textwidth]{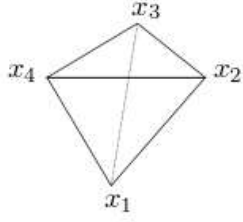}
\caption{One skeleton of a three simplex}
\end{center}
\end{figure}
$\Delta$ consists of 4 vertices and 6 edges, so $\Delta = \{\emptyset, \{x_i\}, \{x_i, x_j\} \mid
 1 \le i \le 4 \mbox{ and } i < j \le 4\}$ and $f(\Delta) = (1,4,6)$.
%the empty face gives $f_{-1}(\Delta) =1$. 
Every maximal nonface of
$\Delta$ is a triangle, so $I_\Delta = \langle x_1x_2x_3, x_1x_2x_4,
x_1x_3x_4, x_2x_3x_4 \rangle$. The complements of the four triangles
are the four vertices, so $\Delta^\vee=\Delta(4,1)$, the four
vertices. Specializing $x_i \mapsto l_i$ yields the Chung-Yao set of Example~\ref{CY4lines}.
\end{exm}
\begin{defn}\label{regseq}
A regular sequence on $S/I$ is a sequence $\{f_1,f_2,\ldots,f_k\} \subseteq S=\R[y_1,\ldots,y_n]$ such that each $f_i$ is not a zero divisor on $S' = S/(I,f_1,\ldots,f_{i-1})$; alternatively, the map $S'\stackrel{\cdot f_i}{\rightarrow}S'$ is injective. The depth of $S/I$ is the length of a maximal regular sequence. It is a theorem \cite{s} that the Cohen-Macaulay condition is equivalent to $\depth(S/I) = n -\codim(I)$.
\end{defn}
\subsection{The simplicial complex of a GC set}
We return to the study of $I_X$. Let $S=\kk[y_1,\ldots, y_m]$, with a variable for each distinct (ignore scaling) linear form which is a
factor of one of the $\prod l_i$ which generate $I_X$, 
and let $\phi: S \rightarrow R$ via $y_i \mapsto l_i$. 
The kernel $L$ of $\phi$ is an ideal generated by $m-d-1$ linear forms. 
Let $I'$ be the ideal in $S$ obtained by substituting $y_i$ for $l_i$ in
$I_X$, so $\phi$ induces a surjective map $\psi: S \stackrel{\phi}{\rightarrow} R \stackrel{\pi}{\rightarrow} R/I_X$. 
Since
\[
\frac{S}{I'+L} \simeq \frac{S/L}{(I'+L)/L} \simeq \frac{\phi(S)}{\phi(I')} \simeq \frac{R}{I_X}= R/\cap_{i=1}^{|X|} \langle l_{i_1},\ldots, l_{i_d} \rangle,
\]
$I'+L = \ker(\psi)$. Let $J=\cap_{i=1}^{|X|} \langle y_{i_1},\ldots, y_{i_d} \rangle$. If $J$ has ${n+d \choose n+1}$ generators in degree $n+1$ then $I'+L = J+L$ with $J$ a codimension $d$ squarefree monomial ideal. Since $S/(J+L) \simeq R/I_X$, $J+L$ is of codimension $m-1$ and depth one, the $m-d-1$ linear forms of $L$ are a regular sequence on $S/J$; because $\depth(S/(J+L))=1$, we can find an additional nonzero divisor on $S/(J+L)$. Thus $S/J$ has depth $m-d$ so is Cohen-Macaulay. 
\begin{defn}\label{Jideal}
For a $GC_{d,n}$ set $X$ with defining ideal $I_X$, write $J_{\Delta(X)}$ for 
the squarefree monomial ideal $J$ appearing above, with $\Delta(X)$ the 
simplicial complex.
\end{defn}
\begin{thm}\label{BCM} If $I'=J$ then the ideal $J_{\Delta(X)}$ is Bi-Cohen Macaulay: both $J_{\Delta(X)}$ and the Alexander dual $J_{\Delta(X)^\vee}$ are Cohen-Macaulay.
\end{thm}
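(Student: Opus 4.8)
The plan is to prove the two Cohen-Macaulay assertions separately, using as the main engine the Eagon-Reiner theorem: for a simplicial complex $\Gamma$ on our vertex set, $S/I_\Gamma$ is Cohen-Macaulay if and only if the Alexander dual ideal $I_{\Gamma^\vee}$ has a linear free resolution. Under the hypothesis $I'=J$, the discussion preceding \defref{Jideal} already establishes the first assertion: the $m-d-1$ linear forms of $L$ form a regular sequence on $S/J$, and a depth count shows $\depth(S/J)=m-d=\dim(S/J)$, so $S/J$ is Cohen-Macaulay. It therefore remains only to prove that $S/J^\vee = S/I_{\Delta(X)^\vee}$ is Cohen-Macaulay. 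Applying Eagon-Reiner to $\Gamma = \Delta(X)^\vee$, and using $(\Delta(X)^\vee)^\vee = \Delta(X)$, this is equivalent to showing that $J = I_{\Delta(X)}$ itself has a linear free resolution.

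To obtain the linear resolution of $J$, the idea is to transport the resolution of $I_X$ across the specialization $\psi$. First I would record the three facts already in hand: $S/L \cong R$ (the map $\phi$ is surjective since the $l_i$ span the linear forms of $R$), $S/(J+L)\cong R/I_X$, and $L$ is generated by a length $m-d-1$ regular sequence of linear forms on $S/J$. The key commutative-algebra fact is that reduction modulo a linear nonzerodivisor preserves graded Betti numbers: if $\ell$ is a linear form that is a nonzerodivisor on a graded module $M$, then a minimal free resolution of $M/\ell M$ over $S/(\ell)$ is obtained by tensoring a minimal resolution of $M$ over $S$ with $S/(\ell)$, so that $\beta_{i,j}^S(M)=\beta_{i,j}^{S/(\ell)}(M/\ell M)$. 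Iterating over the regular sequence $L$ with $M = S/J$ yields
\[
\beta_{i,j}^S(S/J) \;=\; \beta_{i,j}^{S/L}\big(S/(J+L)\big) \;=\; \beta_{i,j}^{R}(R/I_X),
\]
so $S/J$ and $R/I_X$ have identical graded Betti numbers.

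Now \propref{MFRL} shows that the minimal free resolution of $I_X$ has Eagon-Northcott shape with every differential a matrix of linear forms; equivalently $R/I_X$ is $n+1$ regular with $I_X$ generated in degree $n+1$, so $R/I_X$ has a linear resolution. By the Betti-number identity above, $S/J$, and hence $J$, has a linear resolution as well. Applying the Eagon-Reiner theorem to $\Delta(X)^\vee$, the linearity of the resolution of $J = I_{(\Delta(X)^\vee)^\vee}$ is equivalent to $S/J^\vee = S/I_{\Delta(X)^\vee}$ being Cohen-Macaulay. Together with the Cohen-Macaulayness of $S/J$ recorded in the first paragraph, this establishes that $J_{\Delta(X)}$ is Bi-Cohen Macaulay. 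The step I expect to be the main obstacle is the middle one: one must verify with care that $L$ is a genuine regular sequence of linear forms \emph{on $S/J$} (not merely that its image is regular on $R/I_X$), since it is precisely this that licenses the preservation of graded Betti numbers under specialization; once the resolution of $I_X$ is transported to $J$, the two applications of Eagon-Reiner finish the argument.
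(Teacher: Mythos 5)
Your proposal is correct and follows essentially the same route as the paper's proof: Cohen-Macaulayness of $S/J$ from the depth/regular-sequence remarks preceding Definition~\ref{Jideal}, linearity of the resolution of $J$ obtained by transporting the resolution of $I_X$ (Proposition~\ref{MFRL}) back across the regular sequence $L$, and then the Eagon-Reiner theorem to conclude that the Alexander dual is Cohen-Macaulay. The only difference is that you make explicit the Betti-number preservation under specialization, which the paper leaves implicit.
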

\begin{proof}
The Eagon-Reiner theorem \cite{er} states that a Stanley-Reisner ideal $I_\Delta$ is Cohen-Macaulay iff the Alexander dual ideal $I_{\Delta^\vee}$ has a minimal
free resolution where all the matrices representing the maps have only linear forms as entries. The remarks above show that if $I' = J_{\Delta(X)}$ then the ideal is Cohen-Macaulay and by Proposition~\ref{MFRL} has a linear minimal free resolution, so the result follows.
\end{proof}
Even in algebraic geometry, Bi-Cohen Macaulay simplicial complexes are esoteric objects. In \cite{fv}, Fl\o{}ystad-Vatne note that if $\Delta$ is a simplicial complex on $m$ vertices, then the face vectors of $\Delta$ and $\Delta^\vee$ satisfy the relation
\begin{equation}\label{fv1}
f_i(\Delta^\vee) + f_{m-i-2}(\Delta) = {m \choose i+1}.
\end{equation}
\noindent Since $J_{\Delta(X)^\vee}$ has ${n+d \choose d}$ generators in degree $d$, letting $i^* = m-i-2$ we have
\begin{center}
\begin{tiny}
\begin{supertabular}{|c|c|c|c|c|c|c|c|c|c|c|c|}
\hline $i$ & $0$ & $1$ & $2$ & $\cdots$ &  $n\!-\!1$ & $n$ & $\cdots$ & $m-d-1$ & $\cdots$ & $m\!-\!3$  & $m\!-\!2$    \\
\hline ${m \choose i+1}$ & $m$ & ${m \choose 2}$ & ${m \choose 3}$ & $\cdots$ &  ${m \choose n}$ & ${m \choose n+1}$ & $\cdots$ & ${m \choose d}$ & $\cdots$ & ${m \choose 2}$  & $m$    \\
\hline $f_i(\Delta)$ & $m$ & ${m \choose 2}$ & ${m \choose 3}$ & $\cdots$ &  ${m \choose n}$ & ${m \choose n+1}\!-\!{n+d \choose n+1}$ & $\cdots$ & ${n+d \choose d}$ & $\cdots$ & $0$  & $0$    \\
\hline $f_{i^*}(\Delta^\vee)$ & $0$ & $0$ & $0$ & $\cdots$ &  $0$ & ${n+d \choose n+1}$ & $\cdots$ & ${m \choose d}\!-\!{n+d \choose d}$&$\cdots$ & ${m \choose 2}$  & $m$   \\
\hline
\end{supertabular}
\end{tiny}
\end{center}
\vskip .1in
Proposition 3.1 of \cite{fv} gives a complete characterization of the
$f$-vectors that are possible if $\Delta$ is Bi-Cohen Macaulay:
any such $f$-vector is of the form 
\begin{equation}\label{FVECT}
%\[
(1+t)^i \cdot \Bigg( 1+mt+{m \choose 2}t^2+\cdots+{m \choose k}t^k \Bigg).
%\]
\end{equation}
\noindent The key definition of this paper is a version of the GC property for monomial ideals:
\begin{defn}\label{MGC}
Let $I_\Delta$ be a squarefree Bi-Cohen Macaulay monomial ideal of codimension $d$ and degree ${n+d \choose d}$. A primary component $P$ of $I_\Delta$ is {\em monomial GC} if there is a degree $n$ monomial $f$ with $f \in I_\Delta: P$ and $f \not\in P$. If every primary component $P$ of $I_\Delta$ is monomial GC, then $I_\Delta$ is a  {\em monomial $GC_{d,n}$} ideal. $V(y_i)$ is a {\em maximal monomial hyperplane} if $V(y_i)$ contains ${n+d-1 \choose d-1}$ components of $V(I_\Delta)$. 
\end{defn}
\subsection{The simplicial complex of a Chung-Yao set}
In certain cases, the $GC_{d,n}$ property is a consequence of combinatorics: it is inherited from a monomial $GC_{d,n}$ ideal. Suppose there is 
no overlap between the nonzero entries of $f(\Delta^\vee)$ and $f(\Delta)$: 
\[
f_i(\Delta^\vee) \cdot f_{m-i-2}(\Delta) = 0 \mbox{ for all }i.
\]
As $d\ge 3$ and $n \ge 2$, the assumption above implies that
\[
{m \choose d}-{n+d \choose d}=0, \mbox{ so } m=n+d
\]
\begin{lem}
If $f_j(\Delta^\vee) \cdot f_{m-j-2}(\Delta) = 0 \mbox{ for all }j$, then $J_\Delta = I_{\Delta(d+n,n)}$
\end{lem}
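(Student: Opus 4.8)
The plan is to pin down $\Delta$ completely by computing its $f$-vector and then observing that the \emph{complex}, not merely the vector, is forced. First I would extract information about the low-dimensional faces. By Theorem~\ref{icix} and the construction of $J_{\Delta(X)}$, the ideal $J_\Delta$ is generated in degree $n+1$; as a Stanley--Reisner ideal this means every minimal nonface of $\Delta$ has cardinality $n+1$, so that every subset of the vertex set of cardinality at most $n$ is a face. Equivalently $f_i(\Delta)=\binom{m}{i+1}$ for all $0\le i\le n-1$, which is precisely the left-hand portion of the table preceding the lemma.

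Next I would feed the non-overlap hypothesis into the duality relation~\eqref{fv1}. Taking $j=d-1$ in $f_j(\Delta^\vee)\cdot f_{m-j-2}(\Delta)=0$ and reading the corresponding column of the table gives $\bigl(\binom{m}{d}-\binom{n+d}{d}\bigr)\cdot\binom{n+d}{d}=0$; since $\binom{n+d}{d}\neq 0$ this forces $\binom{m}{d}=\binom{n+d}{d}$ and hence $m=n+d$, exactly as noted just above the lemma. Substituting $m=n+d$ into the table entry $f_n(\Delta)=\binom{m}{n+1}-\binom{n+d}{n+1}$ then yields $f_n(\Delta)=0$.

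Finally I would combine the two computations. Because every subset of a face is again a face, $f_n(\Delta)=0$ propagates upward: $f_i(\Delta)=0$ for all $i\ge n$, so $\dim(\Delta)=n-1$. Together with the first step, the faces of $\Delta$ are exactly the subsets of the $m=n+d$ vertices of cardinality at most $n$, which is by definition the $(n-1)$-skeleton $\Delta(n+d,n)$; hence $J_\Delta=I_{\Delta(d+n,n)}$. The step requiring the most care is this last one: a simplicial complex is in general not recoverable from its $f$-vector, so the argument cannot be purely numerical. What makes it work here is that the $f$-vector is \emph{maximal} in each low degree (all $\binom{m}{i+1}$ possible $i$-faces are present) and identically zero in each high degree, which leaves no combinatorial freedom and identifies $\Delta$ uniquely as a skeleton. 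The underlying $f$-vector bookkeeping via~\eqref{fv1} and the Bi-Cohen Macaulay structure is the main technical input, but it has already been assembled in the table.
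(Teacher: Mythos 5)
Your proof is correct, and it shares the overall skeleton of the paper's argument: both read the table built from Equation~\eqref{fv1}, both extract $m=n+d$ from the non-overlap hypothesis via $\bigl(\binom{m}{d}-\binom{n+d}{d}\bigr)\cdot\binom{n+d}{d}=0$, and both use the generator count $\binom{n+d}{n+1}$ from Theorem~\ref{icix}. Where you diverge is the identification step: the paper invokes Fl\o{}ystad--Vatne's classification of $f$-vectors of Bi-Cohen Macaulay complexes (Equation~\eqref{FVECT}), arguing that $m=n+d$ forces $i=0$ there and hence $\Delta=\Delta(m,n)$, before closing with the monomial count. You bypass that external result entirely, replacing it with the elementary observation that a simplicial complex which contains every subset of cardinality at most $n$ (because $J_\Delta$ is generated in degree $n+1$) and satisfies $f_n(\Delta)=\binom{m}{n+1}-\binom{n+d}{n+1}=0$ must, by downward closure, have no faces of dimension $\geq n$ and therefore be exactly the skeleton $\Delta(n+d,n)$. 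This buys self-containedness: your argument needs only the duality relation~\eqref{fv1} and the generator count, not Proposition 3.1 of \cite{fv}. It also makes explicit a point the paper's proof glosses over --- an $f$-vector does not in general determine a complex, and the conclusion $\Delta=\Delta(m,n)$ really rests on the fact that the low-dimensional face numbers are \emph{maximal}, which is precisely the closure argument you supply. What the paper's route buys in exchange is context: it situates the lemma inside the general structure theory of Bi-Cohen Macaulay complexes, showing the skeleton is the only possibility allowed by that classification once $m=n+d$.
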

\begin{proof}
By our observation above, $f_j(\Delta^\vee) \cdot f_{m-j-2}(\Delta) = 0 \mbox{ for all j}$ implies that $m=n+d$. Therefore $i=0$ in Equation~\ref{FVECT}, so $\Delta = \Delta(m,n)$, with $m=d+n$. By Theorem~\ref{icix} $J_\Delta$ has ${m \choose n+1} = {n+d \choose n+1}$ generators, which is exactly the number of squarefree monomials of degree $n+1$ on $n+d$ vertices, hence $J_\Delta = I_{\Delta(d+n,n)}$.
\end{proof}
In Lemma 2.8 of \cite{Gorla}, Gorla shows that the ideal 
$I_{\Delta(d+n,n)}$ is determinantal, and has an Eagon-Northcott resolution. The
construction is as follows: take an $(n+d) \times (n+1)$ matrix $M$ of constants, with
no minor vanishing. Let $M'$ be the result of multiplying the $i^{th}$ column of $M$ by the variable $y_i$. 
Then 
\[
I_{n+1}(M') = I_{\Delta(d+n,n)}
\]
The primary decomposition of $I_{\Delta(d+n,n)}$ is straightforward.
Because $\Delta(d+n,n)$ consists of all $n$ tuples on a groundset 
of size $n+d$, 
\[
I_{\Delta(d+n,n)}= \bigcap\limits_{1 \le i_1 <i_2 < \cdots < i_d \le n+d} \langle y_{i_1},\ldots, y_{i_d}\rangle.
\]
For any of the coordinate hyperplanes $y_i$, it is clear that there are ${n+d-1 \choose d-1}$ terms
in the primary decomposition which contain the fixed linear form $y_i$. For each component in the
primary decomposition, $V(\langle y_{i_1},\ldots, y_{i_d}\rangle)$ is a codimension $d$ linear subspace, and the count above shows that every coordinate hyperplane contains ${n+d-1 \choose d-1}$ such components of $V(I_{\Delta(d+n,n)})$. 
\begin{thm}\label{CYdet}
If $I_\Delta$ is a squarefree Bi-Cohen Macaulay monomial ideal of
codimension $d$ and degree ${n+d \choose d}$, then a specialization by
a regular sequence $\phi: y_i \mapsto l_i$ yields a $n$-correct set. If in
addition $I_{\Delta}$ is a monomial $GC_{d,n}$ ideal, then the
specialization is also a $GC_{d,n}$ set. If $I_\Delta$ has a maximal hyperplane, so does $\phi(I_\Delta)$.
\end{thm}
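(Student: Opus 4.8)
The plan is to treat the three assertions in turn, using throughout that $\phi$ realizes $R/\phi(I_\Delta)$ as the quotient of $S/I_\Delta$ by the regular sequence of $m-d-1$ linear forms generating $L=\ker\phi$. For the $n$-correctness, observe first that since $I_\Delta$ is Cohen--Macaulay of codimension $d$ we have $\depth(S/I_\Delta)=m-d$ by \defref{regseq}, so these $m-d-1$ linear forms are a regular sequence and $R/I_X\cong S/(I_\Delta+L)$, where $I_X:=\phi(I_\Delta)$, is Cohen--Macaulay of Krull dimension $1$ and degree ${n+d \choose d}$; geometrically it cuts out a zero-dimensional scheme in $\p^d_\kk$, which for a sufficiently general (regular-sequence) choice of the $l_i$ is a reduced set $X$ of ${n+d \choose d}$ distinct points. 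The $f$-vector table for a Bi-Cohen Macaulay complex shows $f_i(\Delta)={m \choose i+1}$ for $i\le n-1$, i.e.\ every subset of size at most $n$ is a face; hence $I_\Delta$ has no minimal generators of degree $\le n$ and $(I_\Delta)_j=0$ for $j\le n$. As $\phi$ preserves degree, $(I_X)_n=0$, whence $\dim_\kk(R/I_X)_n=\dim_\kk R_n={n+d \choose n}={n+d \choose d}$. This already equals the degree, i.e.\ the eventual value of the Hilbert polynomial, so $X$ imposes independent conditions in degree $n$; together with $|X|={n+d \choose n}$ this is exactly $n$-correctness.

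Now assume $I_\Delta$ is a monomial $GC_{d,n}$ ideal. I would produce the separating products $Q_p$ of \defref{gcn} directly, by specializing the witnessing monomials. Fix $p\in X$, cut out by a primary component $P=\langle y_{j_1},\ldots,y_{j_d}\rangle$, so that $I_p=\phi(P)=\langle l_{j_1},\ldots,l_{j_d}\rangle$. Let $f$ be the degree $n$ monomial with $f\in I_\Delta:P$ and $f\notin P$ furnished by the monomial GC hypothesis, and set $Q_p:=\phi(f)$, a product of $n$ linear forms. Because $\phi$ is a ring homomorphism, $f\cdot P\subseteq I_\Delta$ gives $Q_p\cdot I_p\subseteq \phi(I_\Delta)=I_X$, i.e.\ $Q_p\in I_X:I_p$. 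Since $X$ is a reduced set of points, $I_X:I_p=\bigcap_{q\ne p}I_q$, so $Q_p(q)=0$ for every $q\ne p$. After rescaling $Q_p$ by the nonzero constant $Q_p(p)$ (absorbed into one linear factor, so $Q_p$ remains a product of $n$ linear forms), we obtain $Q_p(q)=\delta_{pq}$ for all $p,q\in X$, which is precisely the $GC_{d,n}$ condition.

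The one step that is not formal is showing $Q_p(p)\ne0$, and this is the main obstacle: a ring homomorphism cannot in general detect the non-membership $f\notin P$, so this fact must be extracted from the genericity built into the regular-sequence specialization. Writing $f=y_{i_1}\cdots y_{i_n}$, the condition $f\notin P$ says no $i_k$ lies in $\{j_1,\ldots,j_d\}$, and $Q_p(p)=\prod_k l_{i_k}(p)$ is nonzero exactly when no factor $l_{i_k}$ lies in the span of the forms $l_{j_1},\ldots,l_{j_d}$ cutting out $p$. I would verify that choosing $\phi$ so that $L=\ker\phi$ is a regular sequence on $S/I_\Delta$ forces the component spaces $\langle l_{j_1},\ldots,l_{j_d}\rangle$ into linear general position, so that $l_{i_k}$ is not in the span of $l_{j_1},\ldots,l_{j_d}$ whenever $i_k\notin\{j_1,\ldots,j_d\}$; granting this, $Q_p(p)\ne0$ and the GC verification is complete.

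Finally, the maximal-hyperplane claim is a direct count under the same specialization. If $V(y_i)$ is a maximal monomial hyperplane it contains ${n+d-1 \choose d-1}$ primary components of $V(I_\Delta)$; for each such component $P\ni y_i$ we have $l_i\in\phi(P)=I_p$, hence $p\in V(l_i)$. Thus the hyperplane $V(l_i)\subseteq\p^d_\kk$ passes through the ${n+d-1 \choose d-1}$ distinct points corresponding to these components. Since ${n+d-1 \choose d-1}={d-1+n \choose n}$ is exactly the cardinality required of a maximal hyperplane, $V(l_i)$ is a maximal hyperplane of the specialized set $\phi(I_\Delta)$, completing the proof.
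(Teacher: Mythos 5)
Your strategy runs parallel to the paper's own proof: the paper disposes of the theorem in two sentences by asserting that specialization by a regular sequence preserves the minimal free resolution (which, with Proposition~\ref{MFRL}, gives $n$-correctness) and preserves the primary decomposition (which gives the transfer of the monomial GC witnesses and the maximal-hyperplane count). Your explicit construction $Q_p=\phi(f)$ together with the colon-ideal observation $Q_p\in I_X:I_p=\bigcap_{q\neq p}I_q$ is exactly the right way to unwind the second assertion. However, the step you flag yourself --- proving $Q_p(p)\neq 0$ --- is a genuine gap and not a routine verification, and the same gap sits, less visibly, in your first paragraph: when you say the quotient ``for a sufficiently general (regular-sequence) choice of the $l_i$ is a reduced set $X$ of ${n+d \choose d}$ distinct points,'' you have silently replaced the stated hypothesis (the kernel of $\phi$ is a regular sequence on $S/I_\Delta$) by a genericity assumption. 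Everything downstream --- the identification $I_X:I_p=\bigcap_{q\neq p}I_q$, the count $|X|={n+d \choose d}$, and the conclusion that $(I_X)_n=0$ forces independent conditions --- depends on distinct primary components of $I_\Delta$ specializing to distinct reduced points.

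The repair you propose cannot work, because the implication ``$\ker\phi$ is a regular sequence on $S/I_\Delta$ $\Rightarrow$ the specialized component spaces are in linear general position'' is false. Take the paper's own Example~\ref{CY4lines}: $I_\Delta=\langle y_1y_2y_3,y_1y_2y_4,y_1y_3y_4,y_2y_3y_4\rangle\subseteq S=\mathbb{R}[y_1,\ldots,y_4]$, which is Bi-Cohen Macaulay of codimension $2$ and degree $6$ (so $d=n=2$, $m=4$), and specialize by $y_1\mapsto x_0$, $y_2\mapsto x_1$, $y_3\mapsto x_2$, $y_4\mapsto x_0+x_1$. Here $\ker\phi=\langle y_4-y_1-y_2\rangle$, and $y_4-y_1-y_2$ is a nonzerodivisor on $S/I_\Delta$ because the associated primes are the six ideals $\langle y_i,y_j\rangle$ and a linear form lies in $\langle y_i,y_j\rangle$ only if its support is contained in $\{y_i,y_j\}$; so this \emph{is} a specialization by a regular sequence. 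Yet $l_4=l_1+l_2$ lies in the span of $l_1,l_2$: the three components $\langle y_1,y_2\rangle$, $\langle y_1,y_4\rangle$, $\langle y_2,y_4\rangle$ all collapse onto the point $(0:0:1)$, the image is a nonreduced degree-$6$ scheme supported on only four points, and for the component $\langle y_1,y_2\rangle$ the witness $f=y_3y_4$ specializes to $Q_p=x_2(x_0+x_1)$, which vanishes at $p=(0:0:1)$. So no argument can extract $Q_p(p)\neq 0$, or reducedness, from the regular-sequence hypothesis alone; one must either assume the $l_i$ generic or assume outright that distinct components of $I_\Delta$ specialize to distinct points (equivalently, that $\phi$ preserves the primary decomposition --- which is precisely the assertion the paper's own proof invokes without argument). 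You have correctly isolated where the real content of the theorem lies, but as written your proposal does not supply it, and the bridge you suggest is not available.
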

\begin{proof}
As $I_\Delta$ is Cohen-Macaulay, specialization by a regular sequence
preserves the primary decomposition, hence the $GC_{d,n}$ and maximal
hyperplane properties. The fact that the specialization is $n$-correct
follows because specializing by a regular sequence preserves the
minimal free resolution, and Proposition~\ref{MFRL}.
\end{proof}
Continuing with the example where $m=d+n$, for $m$ generic linear forms $l_i \in R$, 
\[
\R[y_1,\ldots, y_m]\stackrel{\phi}{\longrightarrow}\R[x_1,\ldots,x_d], \mbox{ } y_i \mapsto l_i
\]
yields the $GC_{d,n}$ sets of \cite{cy}, which contain $n+d$ maximal hyperplanes. The argument above shows that they also have additional algebraic structure:
\begin{thm}\label{CYdet}
If $X$ is a $GC_{d,n}$ set of Chung-Yao type, then $I_X$ is determinantal.
\end{thm}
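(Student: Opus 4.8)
\section*{Proof proposal}

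The plan is to combine the identification of the Chung-Yao simplicial complex with Gorla's determinantal presentation of $I_{\Delta(d+n,n)}$, and then transport that presentation through the specialization map $\phi$. First I would recall that a Chung-Yao $GC_{d,n}$ set arises exactly from $m=d+n$ generic linear forms $l_1,\ldots,l_{d+n}$, so by the Lemma preceding this theorem the associated squarefree monomial ideal is $J=I_{\Delta(d+n,n)}$, and $I_X=\phi(J)$ where $\phi\colon S=\R[y_1,\ldots,y_{d+n}]\to R$ sends $y_i\mapsto l_i$. Crucially, $\phi$ is a specialization by a regular sequence: its kernel $L$ is generated by $m-d-1=n-1$ linear forms that form a regular sequence on $S/J$.

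Next I would invoke Gorla's Lemma~2.8 of \cite{Gorla}, which gives $I_{\Delta(d+n,n)}=I_{n+1}(M')$, where $M'$ is obtained from a generic $(n+d)\times(n+1)$ matrix of constants by scaling by the variables $y_i$, so that the entries of $M'$ are scalar multiples of single variables. Thus $J$ is already determinantal, presented as the ideal of maximal minors of $M'$. To descend to $R$, note that a determinant is a polynomial in the matrix entries and $\phi$ is a ring homomorphism, so $\phi$ carries each $(n+1)\times(n+1)$ minor of $M'$ to the corresponding minor of the matrix $\phi(M')$, whose entries are now scalar multiples of the linear forms $l_i$. Hence $\phi(M')$ is an $(n+d)\times(n+1)$ matrix of homogeneous linear forms in $R$, and I would conclude
\[
I_X=\phi(J)=I_{n+1}\bigl(\phi(M')\bigr).
\]
Since $n+d\ge n+1\ge 2$, this exhibits $I_X$ as the ideal of maximal minors of an $m\times r$ matrix with $m=n+d\ge r=n+1\ge 2$, which is precisely the definition of determinantal.

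The delicate step is the final equality above: a priori one only knows that the images of the generators match, and one must rule out the possibility that the specialized minors vanish or fail to generate all of $I_X$. This is exactly what is guaranteed by the fact used in Theorem~\ref{CYdet} and Proposition~\ref{MFRL}, namely that specialization by a regular sequence preserves both the primary decomposition and the minimal free resolution. In particular the codimension $d$ is maintained under $\phi$, so the Eagon-Northcott complex of $\phi(M')$ remains exact (the genericity hypotheses of Definition~\ref{ENdef} persist), and the maximal minors of $\phi(M')$ continue to generate $I_X$ minimally rather than collapsing to a smaller ideal. Checking that this regularity of the specialization is in force is the one point that requires care; everything else is a transcription of Gorla's presentation across a ring homomorphism.
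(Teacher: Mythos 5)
Your proposal is correct and takes essentially the same route as the paper: the paper deduces this theorem from precisely the ingredients you assemble, namely the identification $J_\Delta = I_{\Delta(d+n,n)}$ from the preceding lemma, Gorla's Lemma~2.8 giving $I_{\Delta(d+n,n)} = I_{n+1}(M')$, and the fact that specialization by a regular sequence preserves the primary decomposition and minimal free resolution (so the specialized minors still generate $I_X$ and the codimension does not drop). The only difference is one of exposition: the paper leaves the transport step implicit (``the argument above shows''), while you spell out why $\phi(J) = I_{n+1}(\phi(M'))$ and flag the exactness of the specialized Eagon--Northcott complex, which is exactly the point the paper's earlier specialization theorem is meant to cover.
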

\subsection{Constructing GC sets from $I_\Delta$}
One way to construct $GC_{d,n}$ sets is to start with a squarefree
Bi-Cohen Macaulay monomial ideal of codimension $d$ and degree ${n+d
  \choose d}$, which is not a $GC_{d,n}$ monomial ideal, but which has
many GC components. Any specialization will preserve the GC
properties; if $I_\Delta$ has a maximal monomial hyperplane, specialization also preserves it. The next theorem is crucial: it gives a necessary and sufficient combinatorial condition for a primary component to be monomial GC:  
\begin{thm}\label{MGCT}
Let $I_\Delta$ be a squarefree Bi-Cohen Macaulay monomial ideal of degree ${n+d \choose d}$ and codimension $d$:
\[
I_\Delta = \bigcap\limits_{i=1}^{{n+d \choose d}}P_i, \mbox{ with } P_i=\langle x_{i_1},\ldots, x_{i_d}\rangle \subseteq k[x_1,\ldots,x_m]
\]
A primary component $P_i=\langle x_{i_1}, \ldots, x_{i_d}\rangle$ is monomial GC iff there is $\tau \in \Delta_{n-1}$ such that for all $j \in \{1, \ldots, d \}$, $\overline{\tau v_{i_j}} \not\in \Delta_{n}$, where $\overline{\tau v_{i_j}}$ is the join of $\tau$ with $v_{i_j}$.
\end{thm}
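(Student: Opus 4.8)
The plan is to translate the condition of Definition~\ref{MGC} into the combinatorics of $\Delta$ through the Stanley--Reisner dictionary, and to bridge the gap between degree-$n$ monomials and $(n-1)$-faces using the $f$-vector structure forced by the Bi-Cohen Macaulay hypothesis. The first step is to record that structure. From the $f$-vector table preceding Definition~\ref{MGC}, obtained from the Fl\o{}ystad-Vatne relation~\eqref{fv1} together with~\eqref{FVECT}, a squarefree Bi-Cohen Macaulay ideal of codimension $d$ and degree $\binom{n+d}{d}$ satisfies $f_i(\Delta) = \binom{m}{i+1}$ for every $i \le n-1$. Equivalently, every subset of the $m$ vertices of cardinality at most $n$ is a face of $\Delta$, and the $\binom{n+d}{n+1}$ minimal non-faces all have cardinality exactly $n+1$. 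Comparing $\binom{m}{n+1}$ with the number $\binom{n+d}{n+1}$ of minimal non-faces also gives $m \ge n+d$, so there are at least $n$ vertices lying off any fixed component $\{x_{i_1}, \ldots, x_{i_d}\}$.

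Next I would set up the dictionary. For a monomial $f$ put $\sigma = \supp(f)$; since $I_\Delta$ is squarefree, $f \in I_\Delta$ precisely when $\sigma$ is a non-face, while $f \notin P_i$ precisely when $\sigma \cap \{i_1, \ldots, i_d\} = \emptyset$. Because $I_\Delta \subseteq P_i$, any monomial GC witness $f$, that is, a degree-$n$ monomial with $f \notin P_i$ and $f \cdot P_i \subseteq I_\Delta$, automatically has $f \notin I_\Delta$; hence $\sigma$ is a face of cardinality at most $n$ disjoint from $\{i_1, \ldots, i_d\}$, and the requirement $f x_{i_j} \in I_\Delta$ becomes the statement that $\sigma \cup \{i_j\}$ is a non-face for every $j$.

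Both implications are then short. For the forward direction, given such an $f$ I would enlarge $\sigma$ to a set $\tau$ of cardinality exactly $n$ using vertices off $\{i_1, \ldots, i_d\}$, which is possible since $m - d \ge n$; the complete $(n-1)$-skeleton makes $\tau$ a face, so $\tau \in \Delta_{n-1}$, and since $\tau \cup \{i_j\} \supseteq \sigma \cup \{i_j\}$ is a non-face, the join $\overline{\tau v_{i_j}}$ is a non-face and in particular $\overline{\tau v_{i_j}} \notin \Delta_n$. Conversely, given $\tau \in \Delta_{n-1}$ disjoint from the $v_{i_j}$ with each $\overline{\tau v_{i_j}} \notin \Delta_n$, hence a non-face of cardinality $n+1$, the squarefree monomial $f = \prod_{v \in \tau} x_v$ has degree $n$, lies outside $P_i$, and satisfies $f x_{i_j} \in I_\Delta$ for all $j$; thus $f$ certifies that $P_i$ is monomial GC.

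I expect the crux to be the mismatch between the two formulations: the GC property ranges over all degree-$n$ monomials, whose supports may be strictly smaller than $n$, whereas the combinatorial criterion is phrased with faces of dimension exactly $n-1$. This is precisely where the Bi-Cohen Macaulay hypothesis does its work, through the complete $(n-1)$-skeleton and the inequality $m - d \ge n$, which together allow any admissible support to be padded up to an $(n-1)$-face without spoiling the non-face conditions. A smaller point requiring care is the reading of the join $\overline{\tau v_{i_j}}$: one should apply the criterion with $\tau$ disjoint from $\{v_{i_1}, \ldots, v_{i_d}\}$, so that $\overline{\tau v_{i_j}}$ is a genuine candidate $n$-face corresponding to the monomial $f x_{i_j}$.
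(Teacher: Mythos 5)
Your proposal is correct and takes essentially the same route as the paper: both arguments run Definition~\ref{MGC} through the Stanley--Reisner dictionary, using the fact that the Bi-Cohen--Macaulay hypothesis forces $I_\Delta$ to be generated in degree $n+1$, so that $\Delta$ contains the full $(n-1)$-skeleton and degree-$n$ witnesses correspond exactly to faces $\tau \in \Delta_{n-1}$ disjoint from $\{v_{i_1},\ldots,v_{i_d}\}$ whose joins $\overline{\tau v_{i_j}}$ are non-faces. Your padding of a small support up to an $(n-1)$-face and your explicit disjointness remark are slightly more careful versions of steps the paper handles by asserting parenthetically that a witness is necessarily squarefree and by its closing contradiction argument, but the underlying proof is the same.
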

\begin{proof}
From Definition~\ref{MGC}, a primary component $P$ of $I_\Delta$ is monomial GC if there is a degree $n$ monomial (necessarily squarefree) $f$ with $f \in I_\Delta: P_i$ and $f \not\in P_i$. As $I_\Delta$ is generated in degree $n+1$, $\Delta$ contains the $n-1$ skeleton $\Delta(m,n)$; in particular, $f$ corresponds to a face $\tau \in \Delta_{n-1}$. But $f \in I_\Delta: P_i$ iff $f\cdot x_{i_k} \in I_\Delta$ for all $k\in \{1,\ldots, d\}$ iff for all $j \in \{1,\ldots, d\}$, $\overline{\tau v_{i_j}} \not \in \Delta$. Finally, the monomial $f$ is in $P_i$ iff for some  $j \in \{1,\ldots, d\}$, $x_{i_j} \mid f$, which would imply there is a nonsquarefree monomial generator of $I_\Delta$, a contradiction.
\end{proof}
\section{Examples}
\noindent We close with a pair of three dimensional examples.
\begin{exm}\label{BZ3d}
Consider the integral points of the tetrahedron with vertices at 
\[
(0,0,0), (2,0,0), (0,2,0), (0,0,2).
\]
This is a Berzolari-Radon configuration of ten points in
$\R^3$. Lifting to $\mathbb{P}^3$, 
we find the ideal $I_X$ is generated by the ten products of linear forms below:
\[
\begin{array}{cc}
x_2 \cdot (x_2-x_3)\cdot(x_2-2\cdot x_3), & x_1\cdot x_2 \cdot (x_2-x_3), \\
 (x_2-x_3)\cdot x_2 \cdot x_0, & x_1 \cdot x_2 \cdot(x_1-x_3),\\
x_0 \cdot (x_0-x_3) \cdot (x_0-2\cdot x_3) & x_0\cdot x_2 \cdot(x_0-x_3), \\
x_1 \cdot  (x_1-x_3) \cdot(x_1-2\cdot x_3),& x_0 \cdot x_1 \cdot (x_1-x_3)\\
x_0 \cdot x_1 \cdot (x_0-x_3), &x_0 \cdot x_1 \cdot x_2.
\end{array}
\]
We replace this with the monomial ideal $J_\Delta$ generated by ten cubics: 
\[
\begin{array}{cc}
y_2y_6y_7, & y_1y_2y_6,\\
y_0y_2y_6, & y_1y_2y_5,\\
y_0y_4y_9 & y_0y_2y_4, \\
y_1y_5y_8,& y_0y_1y_5,\\
y_0y_1y_4, &y_0y_1y_2.
\end{array}
\]
Substituting $y_i \mapsto x_i$ if $i \in \{0, \ldots, 3\}$ and 
\begin{equation}\label{specialize}
\begin{array}{ccc}
y_4 & \mapsto & x_0-x_3\\
y_5 & \mapsto & x_1-x_3\\
y_6 & \mapsto & x_2-x_3\\
y_7 & \mapsto & x_2-2x_3\\
y_8 & \mapsto & x_1-2x_3\\
y_9 & \mapsto & x_0-2x_3
\end{array}
\end{equation}
sends $J_\Delta \rightarrow I_X$. The simplicial complex $\Delta$ has ten vertices, and is six dimensional, with ten faces of top dimension, and $f$-vector
\[
f(\Delta) = (1,10,45,110,155,126, 55,10).
\]
By construction, the specialization by Equation~\ref{specialize} yields the original $GC_{3,2}$ set. However, this is not very satisfying: we knew the specialization would result in a GC set because we reverse engineered it to do so. 
\end{exm}
\noindent The next example is really the punchline of the paper: it shows that specialization can cause non-GC components to become GC components.
\begin{exm}\label{nathan8}
The monomial ideal $J_\Delta$ generated by 
\[
\langle y_1y_5y_6, y_2y_6y_7, y_3y_7y_8, y_4y_5y_8, y_1y_5y_7, y_2y_6y_8, y_5y_6y_7, y_5y_6y_8, y_5y_7y_8,y_6y_7y_8\rangle
\]
is the Stanley-Reisner ideal for a simplicial complex $\Delta$ on $8$ vertices, with 
\[
f(\Delta) = (1,8,28,46,35,10) = f(\Delta^\vee).
\]
A computation shows that 6 of the 10 components are monomial GC; and that $\Delta$ has four maximal monomial hyperplanes: $\{y_5, y_6,y_7,y_8\}$. $I_\Delta$ is codimension three, and specializing yields a $GC_{3,2}$ set, which is a one-lattice.
\end{exm}
\subsection{Summary}
This paper gives a combinatorial recipe for constructing $GC_{d,n}$ sets from simplicial complexes of a special type. By Theorem~\ref{icix} and Theorem~\ref{CYdet}, a
quotient of the Stanley-Reisner ideal of a 
Bi-Cohen Macaulay simplicial complex $\Delta$ of degree  
${n+d \choose d}$ and codimension $d$ by a regular sequence is $n$-correct. 
The GC property is very special, and so in 
Definition~\ref{MGC}  we give a monomial version of the GC
property.  Theorem~\ref{MGCT} then gives necessary and
sufficient conditions for a component $P_i$ in the primary decomposition of
$I_\Delta$ to have the monomial GC property. Theorem~\ref{CYdet}
shows the GC property is preserved under specialization. 

A test case are Chung-Yao sets, which we show can be obtained by specializing
the $j-1$-skeleton of an $i-1$ simplex; we also show Chung-Yao 
sets are always determinantal. Example~\ref{nathan8} shows 
that specialization $y_i \mapsto l_i$ 
can yield GC sets even when $I_\Delta$ is not monomial GC; it often
suffices to start with the weaker condition that $I_\Delta$ has many
primary components which are monomial GC. 
\vskip .1in
\noindent {\bf Acknowledgements}: We thank an anonymous referee for
many helpful suggestions. Computations in the software system {\tt
  Macaulay2} were essential; 
the package is available at {\tt http://www.math.uiuc.edu/Macaulay2}.
%%%%%%%%%%%%%%%%%%%%%%%%%%%%%%%%%%%%%%%%%%%%%%%%%%%%%%%%
% Back to single space
\renewcommand{\baselinestretch}{1.0}
\small\normalsize % to get previous line to take
%%%%%%%%%%%%%%%%%%%%%%%%%%%%%%%%%%%%%%%%%%%%%%%%%%%%%%%%
\pagebreak
\bibliographystyle{amsalpha}

\end{document}